\newtheorem{theorem}{Theorem}[section]
\newtheorem{lemma}[theorem]{Lemma}
\newtheorem{proposition}[theorem]{Proposition}
\newtheorem{definition}{Definition}[section]
\newtheorem{remark}{Remark}[section]
\newtheorem{example}[theorem]{Example}
\newtheorem{problem}[theorem]{Problem}
\begin{document}
\title[Maximal green sequences for $\mathcal{Q}^N$ quivers]{Maximal green sequences for $\mathcal{Q}^N$ quivers}
\date{\today}

\author{Jingmin Guo}
\address{Jingmin Guo: School of Mathematics and Statistics, Lanzhou University, Lanzhou 730000, P. R. China.}
\email{guojm18@lzu.edu.cn}
	
\author{Bing Duan}
\address{Bing Duan: School of Mathematics and Statistics, Lanzhou University, Lanzhou 730000, P. R. China.}
\email{duanbing@lzu.edu.cn}
	
\author{Yanfeng Luo$^\dag$}
\address{Yanfeng Luo: School of Mathematics and Statistics, Lanzhou University, Lanzhou 730000, P. R. China.}
\email{luoyf@lzu.edu.cn}
\thanks{$\dag$ Corresponding author}

\date{}
	
\maketitle

\begin{abstract}
We introduce $\mathcal{Q}^N$ quivers and construct maximal green sequences for these quivers. We prove that any finite connected full subquiver of the quivers defined by Hernandez and Leclerc, arising in monoidal categorifications of cluster algebras, is a special case of $\mathcal{Q}^N$ quivers. Moreover, we prove that the trees of oriented cycles introduced by Garver and Musiker are special cases of $\mathcal{Q}^N$ quivers. This result resolves an open problem proposed by Garver and Musiker, providing a construction of maximal green sequences for quivers that are trees of oriented cycles. Furthermore, we prove that quivers that are mutation equivalent to an orientation of a type AD Dynkin diagram can also be recognized as special cases of $\mathcal{Q}^N$ quivers.
\end{abstract}

\hspace{0.15cm}
		
\noindent

{\bf Keywords}: cluster algebras, quivers, maximal green sequences
		
\hspace{0.15cm}
		
\noindent

{\bf 2020 Mathematics Subject Classification}: 13F60, 16G20

\tableofcontents
	
\setcounter{section}{0}
	
\allowdisplaybreaks
	
\section{Introduction}

Maximal green sequences are certain mutation sequences of framed cluster quivers. They were introduced by Keller in \cite{K11} to generate quantum dilogarithm identities and compute refined Donaldson-Thomas invariants.

The existence of maximal green sequences in a quiver has significant consequences. For example, a quiver with potential has a finite-dimensional Jacobian algebra if it admits such a sequence \cite{BDP14, K11}, which enables the construction of cluster categories, as introduced by Amiot \cite{A09}. In cluster algebra theory, maximal green sequences are associated with the relationship between cluster algebras and their upper cluster algebras \cite{M18}. They also support the construction of the theta basis in cluster algebras \cite{GHKK18} and good bases which contain the cluster monomials for a large class of upper cluster algebras \cite{Q22}.

In general, determining the existence of a maximal green sequence for a given quiver is a highly challenging task. In \cite{M16}, Muller proved that the existence of maximal green sequences is not invariant under quiver mutation and full subquivers inherit maximal green sequences. The study of the existence of maximal green sequences has been conducted in specific cases, such as quivers of finite type and acyclic quivers \cite{BDP14}, quivers arising from triangulations of surfaces \cite{B16, BM18, M17}, acyclic weighted quivers \cite{BHIT17}, and quivers arising from weighted projective lines \cite{FG23}. In \cite{BDP14, S14}, it was shown that certain quivers do not have a maximal green sequence. We refer the reader to \cite{K20} for recent progress. 

The purpose of this article is to construct explicit maximal green sequences for a class of quivers, including well-known examples such as quivers arising from monoidal categorifications of cluster algebras  \cite{HL16}, trees of oriented cycles \cite{GM17},  and quivers that are mutation equivalent to an orientation of a type AD Dynkin diagram \cite{BV08, V10}.

For $N\in\mathbb{Z}_{\geq1}$, we define the $\mathcal{Q}^N$ quivers $Q^N$, see Definition \ref{subsection: the definition of GHL-quiver}. We call a linearly oriented Dynkin quiver of type A a vertical chain. Then $Q^N$ is composed of $N$ vertical chains and arrows connecting these vertical chains. We introduce a partial order $>$ on the vertex set of $Q^N$, see Definition \ref{definition: partial order}. For each vertex of $Q^N$, we define an associated mutation sequence, see Definition \ref{The associated sequence}. By combining these mutation sequences with respect to the partial order defined in Definition \ref{definition: partial order}, we construct an explicit maximal green sequence for $Q^N$, see Theorem \ref{Main Theorem: maximal green sequences}. To verify this theorem, we first study quivers $Q^1$ and $Q^2$ in Section \ref{subsection: Two special classes of generalized quivers}. We prove that Theorem \ref{Main Theorem: maximal green sequences} holds for $Q^2$ in Lemma \ref{lemma of two vertical chains}. The proof of Theorem \ref{Main Theorem: maximal green sequences} heavily relies on the results of Lemma \ref{lemma of two vertical chains} as well as its proof.

To study the connections between finite-dimensional representations of quantum affine algebras and cluster algebras, Hernandez and Leclerc \cite{HL16} introduced quivers $G$ for every complex simple Lie algebra $\mathfrak{g}$ of finite type. These quivers play an important role in monoidal categorifications of cluster algebras. We prove that any finite connected full subquiver $\bar{G}$ of $G$ is a $\mathcal{Q}^N$ quiver, see Proposition \ref{HL quiver are GHL quiver}.  

Garver and Musiker introduced the concept of trees of $3$-cycles, and generalized it to trees of oriented cycles \cite{GM17}. They proposed an open problem, to find a construction of maximal green sequences for quivers that are trees of oriented cycles, see Problem \ref{Open problem1}. We prove that trees of oriented cycles are $\mathcal{Q}^N$ quivers, see Proposition \ref{prop:tree of oriented cycles}.  As a result,  Theorem \ref{Main Theorem: maximal green sequences} provides a solution for Problem \ref{Open problem1}. 
Moreover, we generalize the definition of trees of oriented cycles, and introduce of a class of irreducible quivers whose cycles are all oriented, we also prove them are $\mathcal{Q}^N$ quivers, see Proposition \ref{Prop:irreducible quivers contains oriented cycles}. 

We use $\mu^{A}$ and $\mu^{D}$ to denote the set of quivers that are mutation equivalent to orientations of type A and type D Dynkin diagrams, respectively. We prove that quivers in $\mu^A$ and $\mu^D$ are $\mathcal{Q}^N$ quivers, see Propositions \ref{prop:quiver in mu A} and \ref{theorem: a quiver of Type D is a generalized HL-quiver}. By applying Theorem \ref{Main Theorem: maximal green sequences}, we can get explicit maximal green sequences of quivers in $\mu^A$ and $\mu^D$.  Moreover, the maximal green sequences constructed for quivers in $\mu^A$ in this paper are different from those constructed in \cite{CDRSW16, GM17}, see Remark \ref{remark: the quiver in mu A}. Additionally, our construction method of the maximal green sequences for quivers in $\mu^D$ is different from the method of \cite{GMS18}.

The paper is organized as follows. In Section \ref{preface}, we review the notions of quiver mutations and maximal green sequences. In Section \ref{Section: Generalized quivers}, we introduce the $\mathcal{Q}^N$ quivers and construct explicit maximal green sequences for $\mathcal{Q}^N$ quivers. In Section \ref{Section: Applications}, we verify that any finite connected full subquiver of the quivers introduced by Hernandez and Leclerc can be regarded as a special case of $\mathcal{Q}^N$ quivers. Moreover, we prove that trees of oriented cycles, as well as quivers in $\mu^A$ and $\mu^D$, are special cases of $\mathcal{Q}^N$ quivers.

\section{Preliminary} \label{preface}
In this section, we recall the definitions of quivers, quiver mutations \cite{FZ02} and maximal green sequences \cite{BDP14, K11}.

\subsection{Quivers and quiver mutations}
A \textit{quiver} $Q$ is a directed graph. More precisely, $Q$ is defined as a $4$-tuple $(Q_0, Q_1, s, t)$, where $Q_0$ is a set of vertices, $Q_1$ is a set of arrows, and two functions $s, t: Q_1\rightarrow Q_0$ are defined so that $s(\alpha) \xrightarrow{\alpha} t(\alpha)$ for every $\alpha \in Q_1$.

A quiver $Q$ is a finite quiver if both $Q_0$ and $Q_1$ are finite sets. An arrow $\alpha$ is a loop if $s(\alpha)=t(\alpha)$; a $2$-cycle is a pair of distinct arrows $\beta$ and $\gamma$ such that $s(\beta)=t(\gamma)$ and $t(\beta)=s(\gamma)$. Throughout this paper, when we mention a quiver, it is implied to be a finite connected quiver without loops or $2$-cycles. A quiver is said to be irreducible if each arrow of this quiver is in some oriented cycle \cite{CL19, GM17}.

An \textit{ice quiver} is defined as a pair $(Q, F)$, where $Q$ is a quiver and $F \subset Q_0$ is a subset of vertices referred to as the \textit{frozen vertices} such that there are no arrows between them. The elements in $Q_0 \backslash F$ are known as mutable vertices. 
For an arrow $i \rightarrow j$ of $(Q, F)$, we say that $i$ points toward $j$, and $j$  points away $i$, and that $i$ and $j$ are neighbors.

Following \cite{FZ02}, let $(\mu_k(Q), F)$ be an ice quiver obtained from $(Q,F)$ by mutating at $k\in Q_0 \backslash F$, where $\mu_k(Q)$ is defined as follows:
\begin{enumerate}
\item for any two-arrow path $i\rightarrow k\rightarrow j$ in $Q$, adding a new arrow $i\rightarrow j$ unless $i, j \in F$;
\item reversing all arrows incident to $k$ in $Q$;
\item repeatedly removing oriented 2-cycles until unable to do so.
\end{enumerate}
Note that $\mu_k$ is an involution, that is, $\mu_k(\mu_k(Q))=Q$. In the sequel, we will avoid applying two consecutive mutations at the same vertex. Two quivers are \textit{mutation equivalent} if there exists a finite sequence of mutations sending one quiver to another quiver.

\subsection{Maximal green sequences}
Let $Q$ be a quiver with vertex set $Q_0=\{1,\dots,n\}$. Following \cite{BDP14, K11}, a framed (respectively, coframed) quiver $\widehat{Q}$ (respectively, $\widecheck{Q}$) of $Q$ is an ice quiver defined as follows:
\begin{align*}
& \widehat{Q}_0=\widecheck{Q}_0:=Q_0 \cup \{1',2',\dots,n'\}, \quad  F=\{1',2', \dots, n'\},\\
& \widehat{Q}_1=Q_1\sqcup\{i\rightarrow i'\mid 1\leq i\leq n\}  \quad (\text{respectively},~ \widecheck{Q}_1=Q_1\sqcup\{i\leftarrow i'\mid 1\leq i\leq n\}).
\end{align*}
Denote by $Q'$ a quiver obtained from $\widehat{Q}$ by applying a mutation sequence. We say that $i\in Q_0$ is \textit{green} in $Q'$ if
\[
\{j'\in F \mid \exists \, j' \rightarrow i\in Q'_1\}=\emptyset,
\]
and it is \textit{red} if
\[
\{j'\in F\mid \exists \, j'\leftarrow i\in Q'_1\}=\emptyset.
\]

The $\mathbf{g}$-vectors were introduced by Fomin and Zelevinsky to describe the degree vectors of cluster variables with principal coefficients \cite{FZ07}. It was proved in \cite[Theorem 1.7]{DWZ10} that $\mathbf{g}$-vectors are sign-coherent, which implies that every non-frozen vertex of $Q'$ is either red or green.

Let $\underline{\mu}=\mu_{\ell_k} \circ \mu_{\ell_{k-1}} \circ \dots \circ \mu_{\ell_1}$ be a mutation sequence of $\widehat{Q}$. Then the \textit{length} of $\underline{\mu}$ is defined to be $k$. In this paper, when we refer to mutating $\underline{\mu}$, it implies that the mutations are performed in a right-to-left order. If $\ell_j$ is a green vertex in $\mu_{\ell_{j-1}} \circ \mu_{\ell_{j-2}} \circ\dots \circ \mu_{\ell_1}(\widehat{Q})$ for $1\leq j\leq k$, then $\underline{\mu}$ is a \textit{green sequence} of $Q$. If $\underline{\mu}$ is a green sequence and the vertices $1,2, \dots, n$ are red in $\underline{\mu}(\widehat{Q})$, then $\underline{\mu}$ is a \textit{maximal green sequence} of $Q$. 

\section{Maximal green sequences for $\mathcal{Q}^N$ quivers}\label{Section: Generalized quivers}

In this section, we introduce  $\mathcal{Q}^N$ quivers and construct explicit maximal green sequences for them. 

\subsection{The definition of $\mathcal{Q}^N$ quivers} \label{subsection: the definition of GHL-quiver}
\begin{definition}\label{the definition of generalized HL-quiver}
For $N\in\mathbb{Z}_{\geq1}$, define a class of quivers $Q^N$ as finite connected quivers with the vertex sets $Q^N_0:=\{v^{i}_{\ell}\mid i=1,2,\dots,N, \,
\ell=1,2,\dots,k_i, \, k_{i}\in\mathbb{Z}_{\geq1}\}$, and the arrows of $Q^N$ are given as follows:
\begin{itemize}
\item [(1)]the vertical arrows
\[
v^{i}_{1}\leftarrow v^{i}_{2}\leftarrow \dots\leftarrow v^{i}_{k_i},\quad i=1,2,\dots,N;
\]
\item [(2)]the oblique arrows
\[
v^{i}_{a_1}\rightarrow v^{j}_{b_1} \rightarrow v^{i}_{a_2} \rightarrow v^{j}_{b_2} \rightarrow v^{i}_{a_3} \rightarrow v^{j}_{b_3}\rightarrow \cdots
\]
such that $i\neq j$, $1\leq i,j \leq N$, $1\leq a_1< a_2 <a_3< \dots \leq k_i$ and $1\leq b_1< b_2 <b_3< \dots \leq k_j$;
\item [(3)]if there is an arrow connecting $\{v^{i_r}_{\ell}\mid 1\leq\ell\leq k_{i_r}\}$ to $\{v^{i_{r+1}}_{\ell}\mid 1\leq\ell\leq k_{i_{r+1}}\}$ for $r=1,2,\dots, m-1$, then there is no arrow connecting $\{v^{i_1}_{\ell}\mid 1\leq\ell\leq k_{i_1}\}$ to $\{v^{i_{m}}_{\ell}\mid 1\leq\ell\leq k_{i_{m}}\}$, where $3\leq m \leq N$, $i_1\neq i_2\neq\dots\neq i_m$, $1\leq i_r \leq N$ for $r=1,2,\dots,m$.
\end{itemize}
For convenience, we call quivers $Q^N$, $N\in\mathbb{Z}_{\geq1}$, $\mathcal{Q}^N$ quivers.
\end{definition}

\begin{remark}
In $Q^N$, we call
\[
 \scalebox{0.6}{\xymatrix{
{\large\text{$v^{i}_1$}}\\
{\large\text{$v^{i}_2$}}\ar[u]\\
\vdots \ar[u]\\
{\large\text{$v^{i}_{k_i}$}} \ar[u]
}}
\]
a \textit{vertical chain}, the length of this vertical chain is defined to be $k_i$. By Definition \ref{the definition of generalized HL-quiver}, $Q^N$ is composed of $N$ vertical chains and oblique arrows connecting these vertical chains. Let 
\[
v^{i}_{1}\leftarrow v^{i}_{2} \leftarrow \dots \leftarrow v^{i}_{k_i}
\]
and 
\[
v^{j}_{1}\leftarrow v^{j}_{2}\leftarrow \dots \leftarrow v^{j}_{k_j}
\]
be two vertical chains of $Q^2$. It follows from Definition \ref{the definition of generalized HL-quiver} that the arrows connecting $\{v^{i}_{\ell}\mid 1\leq\ell\leq k_{i}\}$ to $\{v^{j}_{\ell}\mid 1\leq\ell\leq k_{j}\}$ exactly form a path. Then $Q^2$ must be one of quivers in Figure \ref{A Quiver of two vertical chains}.

\begin{figure}
\begin{minipage}[b]{0.48\linewidth}
\centering
\adjustbox{scale=0.4}{
\begin{tikzcd}
 {\huge\text{$v^{i}_1$}} \\
 {\huge\text{$v^{i}_2$}} &&{\huge\text{$v^{j}_{1}$}}  \\
 \vdots && {\huge\text{$v^{j}_2$}} \\
 {\huge\text{$v^{i}_{a_1}$}} && \vdots \\
 {\huge\text{$v^{i}_{a_1+1}$}}  && {\huge\text{$v^{j}_{b_1}$}} \\
 \vdots &&   {\huge\text{$v^{j}_{b_{1}+1}$}}   \\
 {\huge\text{$v^{i}_{a_2}$}} && \vdots \\
 \vdots && {\huge\text{$v^{j}_{b_{2}}$}}   \\
 {\huge\text{$v^{i}_{a_{n-1}}$}}  && \vdots \\
 {\huge\text{$v^{i}_{a_{n-1}+1}$}}  && {\huge\text{$v^{j}_{b_{n-1}}$}}  \\
 \vdots &&  {\huge\text{$v^{j}_{b_{n-1}+1}$}}  \\
  {\huge\text{$v^{i}_{a_{n}}$}}  && \vdots \\
 {\huge\text{$v^{i}_{a_{n}+1}$}}  && {\huge\text{$v^{j}_{b_{n}}$}} \\
 \vdots &&{\huge\text{$v^{j}_{b_{n}+1}$}}  \\
 {\huge\text{$v^{i}_{a_{n+1}}$}} && \vdots \\
 {} && {\huge\text{$v^{j}_{b_{n+1}}$}}
 \arrow[from=2-1, to=1-1]
 \arrow[from=15-1, to=14-1]
 \arrow[from=14-1, to=13-1]
 \arrow[from=13-1, to=12-1]
 \arrow[from=12-1, to=11-1]
 \arrow[from=11-1, to=10-1]
 \arrow[from=10-1, to=9-1]
 \arrow[from=9-1, to=8-1]
 \arrow[from=8-1, to=7-1]
 \arrow[from=7-1, to=6-1]
 \arrow[from=6-1, to=5-1]
 \arrow[from=5-1, to=4-1]
 \arrow[from=4-1, to=3-1]
 \arrow[from=3-1, to=2-1]
 \arrow[from=3-3, to=2-3]
 \arrow[from=4-3, to=3-3]
 \arrow[from=5-3, to=4-3]
 \arrow[from=6-3, to=5-3]
 \arrow[from=7-3, to=6-3]
 \arrow[from=8-3, to=7-3]
 \arrow[from=9-3, to=8-3]
 \arrow[from=10-3, to=9-3]
 \arrow[from=11-3, to=10-3]
 \arrow[from=12-3, to=11-3]
 \arrow[from=13-3, to=12-3]
 \arrow[from=4-1, to=5-3]
 \arrow[from=5-3, to=7-1]
 \arrow[from=7-1, to=8-3]
 \arrow[from=9-1, to=10-3]
 \arrow[from=10-3, to=12-1]
 \arrow[from=16-3, to=15-3]
 \arrow[from=15-3, to=14-3]
 \arrow[from=14-3, to=13-3]
 \arrow[from=12-1, to=13-3]
\end{tikzcd}
}\caption*{(a)}
\end{minipage}
\begin{minipage}[b]{0.48\linewidth}
\centering
\adjustbox{scale=0.4}{
\begin{tikzcd}
{\huge\text{$v^{i}_1$}} \\
{\huge\text{$v^{i}_2$}} &&{\huge\text{$v^{j}_{1}$}}  \\
 \vdots && {\huge\text{$v^{j}_2$}} \\
 {\huge\text{$v^{i}_{a_1}$}} && \vdots \\
 {\huge\text{$v^{i}_{a_1+1}$}}  && {\huge\text{$v^{j}_{b_1}$}} \\
 \vdots &&   {\huge\text{$v^{j}_{b_{1}+1}$}}   \\
 {\huge\text{$v^{i}_{a_2}$}} && \vdots \\
 \vdots && {\huge\text{$v^{j}_{b_{2}}$}}   \\
 {\huge\text{$v^{i}_{a_{n-1}}$}}  && \vdots \\
 {\huge\text{$v^{i}_{a_{n-1}+1}$}}  && {\huge\text{$v^{j}_{b_{n-1}}$}}  \\
 \vdots &&  {\huge\text{$v^{j}_{b_{n-1}+1}$}}  \\
  {\huge\text{$v^{i}_{a_{n}}$}}  && \vdots \\
 {\huge\text{$v^{i}_{a_{n}+1}$}}  && {\huge\text{$v^{j}_{b_{n}}$}} \\
 \vdots  \\
 {\huge\text{$v^{i}_{a_{n+1}}$}}
 \arrow[from=2-1, to=1-1]
 \arrow[from=15-1, to=14-1]
 \arrow[from=14-1, to=13-1]
 \arrow[from=13-1, to=12-1]
 \arrow[from=12-1, to=11-1]
 \arrow[from=11-1, to=10-1]
 \arrow[from=10-1, to=9-1]
 \arrow[from=9-1, to=8-1]
 \arrow[from=8-1, to=7-1]
 \arrow[from=7-1, to=6-1]
 \arrow[from=6-1, to=5-1]
 \arrow[from=5-1, to=4-1]
 \arrow[from=4-1, to=3-1]
 \arrow[from=3-1, to=2-1]
 \arrow[from=3-3, to=2-3]
 \arrow[from=4-3, to=3-3]
 \arrow[from=5-3, to=4-3]
 \arrow[from=6-3, to=5-3]
 \arrow[from=7-3, to=6-3]
 \arrow[from=8-3, to=7-3]
 \arrow[from=9-3, to=8-3]
 \arrow[from=10-3, to=9-3]
 \arrow[from=11-3, to=10-3]
 \arrow[from=12-3, to=11-3]
 \arrow[from=13-3, to=12-3]
 \arrow[from=4-1, to=5-3]
 \arrow[from=5-3, to=7-1]
 \arrow[from=7-1, to=8-3]
 \arrow[from=9-1, to=10-3]
 \arrow[from=10-3, to=12-1]
\end{tikzcd}
}\caption*{(b)}
\end{minipage}
\caption{ (a) Quiver  $Q^2$, where $k_i=a_{n+1}$, $k_j=b_{n+1}$. (b) Quiver  $Q^2$, where $k_i=a_{n+1}$, $k_j=b_{n}$.}
\label{A Quiver of two vertical chains}
\end{figure}
\end{remark}

\subsection{A partial order defined on the vertex sets of $\mathcal{Q}^N$ quivers}\label{section: partial order}

To construct a maximal green sequence for a $\mathcal{Q}^N$ quiver, we introduce a partial order on its vertex set.

\begin{definition}\label{definition: partial order}
For a quiver $Q^N$, we define a partial order $>$ on its vertex set as follows.
\begin{itemize}
\item[(1)] Let
\[
v^{i}_{1} \leftarrow v^{i}_{2}\leftarrow \dots \leftarrow v^{i}_{k_i}
\]
be a vertical chain of $Q^N$. Then
\[
v^i_1 > v^i_2> \dots > v^i_{k_i}.
\]
\item[(2)]Set $b_0=0$. If two vertical chains in $Q^N$ are connected by arrows as shown in Figure \ref{A Quiver of two vertical chains}(a), then 
\begin{gather}
\begin{align*}
&v^{i}_{a_{\ell}}>v^{j}_{b_{\ell-1}+1}> v^{j}_{b_{\ell-1}+2}>\dots>v^{j}_{b_{\ell}}\quad \text{for}\ \ell=1,2,\dots, n,\\
&v^{j}_{b_{\ell}}>v^{i}_{a_{\ell}+1}>v^{i}_{a_{\ell}+2}>\dots>v^{i}_{a_{\ell+1}} \quad \text{for}\ \ell=1,2,\dots, n-1,\\
&v^{j}_{b_n}>v^{i}_{a_{n}+1}>v^{i}_{a_{n}+2}>\dots>
v^{i}_{a_{n+1}}>v^{j}_{b_n+1}>v^{j}_{b_n+2}>\dots>v^{j}_{b_{n+1}}.
\end{align*}
\end{gather}
If two vertical chains in $Q^N$ are connected by arrows as shown in Figure \ref{A Quiver of two vertical chains}(b), then
\begin{gather}
\begin{align*}
&v^{i}_{a_{\ell}}>v^{j}_{b_{\ell-1}+1}> v^{j}_{b_{\ell-1}+2}>\dots>v^{j}_{b_{\ell}}\quad \text{for}\ \ell=1,2,\dots, n-1,\\
&v^{j}_{b_{\ell}}>v^{i}_{a_{\ell}+1}>v^{i}_{a_{\ell}+2}>\dots>v^{i}_{a_{\ell+1}}\quad \text{for}\ \ell=1,2,\dots, n-1,\\
&v^{i}_{a_n}>v^{j}_{b_{n-1}+1}>v^{j}_{b_{n-1}+2}>\dots>
v^{j}_{b_{n}}>v^{i}_{a_n+1}>v^{i}_{a_n+2}>\dots>v^{i}_{a_{n+1}}.
\end{align*}
\end{gather}
\end{itemize}
\end{definition}

We give an example to illustrate the definition of above partial order.

\begin{example}
In the following quiver, we have
\begin{align*}
&b_1>b_2>a_1>a_2>b_3>b_4>b_5>a_3>a_4>a_5>b_6>a_6,\\
&c_1>b_1>b_2>b_3>c_2>c_3>b_4>c_4>b_5>b_6>c_5.
\end{align*}
\[
 \scalebox{0.3}{\xymatrix{
&& {\text{\huge { $ b_1$ } }}\\
 {\text{\huge { $ a_1$ } }} &&&{\text{\huge { $ c_1$ } }}\ar[lddd] \\
&& {\text{\huge { $ b_2$ } }} \ar[uu]\ar[lld]\\
 {\text{\huge { $ a_2$ } }} \ar[uu] \ar[rrddddd]&& & {\text{\huge { $ c_2$ } }}\ar[uu]\\
&&  {\text{\huge { $ b_3$ } }} \ar[uu]\ar[rd]\\
 {\text{\huge { $ a_3$ } }} \ar[uu] && &{\text{\huge { $ c_3$ } }}\ar[uu]\ar[ld] \\
&& {\text{\huge { $ b_4$ } }} \ar[uu]\ar[rd]\\
 {\text{\huge { $ a_4$ } }} \ar[uu] &&& {\text{\huge { $ c_4$ } }}\ar[uu] \\
& &  {\text{\huge { $ b_5$ } }} \ar[uu]\ar[lld]\\
 {\text{\huge { $ a_5$ } }}  \ar[uu] &&& {\text{\huge { $ c_5$ } }}\ar[uu]  \\
& & {\text{\huge { $ b_6$ } }}  \ar[uu]\\
 {\text{\huge { $ a_6$ } }}  \ar[uu]& &\\
}}
\]
\end{example}

\subsection{A key lemma} \label{subsection: Two special classes of generalized quivers}
In this subsection, we study quivers $Q^1$ and $Q^2$.

Let $Q$ be a linearly oriented Dynkin quiver of type $A_n$, as shown below
\begin{align}\label{quiver Q^1}
1\leftarrow 2 \leftarrow \cdots  \leftarrow n.
\end{align}
As we know, $Q$ has a maximal green sequence given by  
\begin{align}\label{The mutation sequence of a vertical chain}
\mu_1\circ\mu_2\circ\mu_1\circ\dots\circ \mu_{n-1}\circ\dots\circ\mu_2\circ\mu_1\circ\mu_n\circ\dots\circ\mu_2\circ\mu_1
\end{align}

\begin{definition}\label{The associated sequence}
Let
\[
V:=v^i_1\leftarrow v^i_2 \leftarrow \dots \leftarrow v^i_{k_i}
\]
be a vertical chain of a quiver $Q^N$. For $1\leq \ell \leq k_i$, we define
\[
\underline{\mu}_{v^{i}_\ell}:=\mu_{v^i_{k_i-\ell+1}}\circ\mu_{v^i_{k_i-\ell}}\circ\dots\circ\mu_{v^i_1}.
\]
\end{definition}

\begin{remark}\label{The case of one vertical chain}
When we view $(\ref{quiver Q^1})$ as a quiver $Q^1$, the sequence $(\ref{The mutation sequence of a vertical chain})$ can be rewritten as follows
\[
\underline{\mu}:=\underline{\mu}_n\circ \underline{\mu}_{n-1}\circ \cdots \circ \underline{\mu}_2 \circ \underline{\mu}_1.
\]
Moreover, if $k$, $1\leq k \leq n$, is a mutable vertex of the sequence $\underline{\mu}$, after mutating all the vertices before $k$ in $\underline{\mu}$, then $k$ is a sink in the resulting quiver when we do not consider the frozen vertices,  and the arrows connecting the non-frozen vertices to $k$ are as follows (excluding the vertices that do not belong to the set $\{1,2,\dots,n\}$ or the arrows incident to it)
\[
 \scalebox{0.5}{\xymatrix{
 {\text{\large { $ k-1$ } }}\ar[d]\\
  {\text{\large { $ k $ } }}\\
 {\text{\large { $ k+1$ } }}\ar[u]
 }}
\]
Since $k$ is a green vertex, the arrows connecting the frozen vertices to $k$ point away $k$.
\end{remark}

In fact, in \cite{BMRYZ20}, the authors proved that the number of maximal green sequences of (\ref{quiver Q^1}) is at least $2^{n-1}$.

The following lemma is the main result of this subsection.
 
\begin{lemma}\label{lemma of two vertical chains}
Let $Q^2$ be the quiver (b) of Figure \ref{A Quiver of two vertical chains}. Then
\begin{gather}
\begin{align*}
\underline{\mu}=&\underline{\mu}_{v^{i}_{a_{n+1}}} \circ \underline{\mu}_{  v^{i}_{a_{n+1}-1} } \circ\cdots \circ  \underline{\mu}_{ v^{i}_{a_{n}+1} }\circ
\underline{\mu}_{  v^{j}_{b_{n}} } \circ\underline{\mu}_{  v^{j}_{b_{n}-1} } \circ  \cdots \circ \underline{\mu}_{  v^{j}_{b_{n-1}+1} } \circ
\underline{\mu}_{  v^{i}_{a_{n}} } \circ \underline{\mu}_{  v^{i}_{a_{n}-1} } \circ  \cdots  \\
&\circ\underline{\mu}_{  v^{i}_{a_{n-1}+1} }\circ
 \cdots \circ \underline{\mu}_{  v^{j}_{b_{2}} }\circ \underline{\mu}_{  v^{j}_{b_{2}-1} } \circ \cdots \circ \underline{\mu}_{  v^{j}_{b_{1}+1} } \circ
\underline{\mu}_{  v^{i}_{a_2} }\circ\underline{\mu}_{  v^{i}_{a_2-1} } \circ \cdots \circ \underline{\mu}_{  v^{i}_{a_1+1} }
 \circ\underline{\mu}_{  v^{j}_{b_{1}} }\circ\underline{\mu}_{  v^{j}_{b_{1}-1} } \\
&\circ \cdots\circ \underline{\mu}_{  v^{j}_{1} }\circ \underline{\mu}_{  v^{i}_{a_1} }\circ \underline{\mu}_{  v^{i}_{a_1-1} }\circ \cdots \circ  \underline{\mu}_{  v^{i}_{1} }
\end{align*}
\end{gather}
is a maximal green sequence of $Q^2$.
\end{lemma}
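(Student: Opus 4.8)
The plan is to verify directly that $\underline{\mu}$ is a green sequence and that it makes every mutable vertex red, by following the framed quiver $\widehat{Q^2}$ through the whole composition. First I would record that, according to the order underlying Definition \ref{definition: partial order}, $\underline{\mu}$ splits into consecutive blocks $\underline{\mu}_v$, one per vertex $v$, applied from the largest vertex $v^i_1$ down to the smallest $v^i_{a_{n+1}}$. These blocks organize into alternating runs along the two chains: a run down chain $i$ from $v^i_1$ to the junction $v^i_{a_1}$, a run down chain $j$ to $v^j_{b_1}$, a run down chain $i$ from $v^i_{a_1+1}$ to $v^i_{a_2}$, and so on, ending with the tail $v^i_{a_n+1}, \dots, v^i_{a_{n+1}}$. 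The two things to establish are that every individual mutation inside these blocks occurs at a green vertex, and that the vertices already processed are red and remain so.

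The core of the proof is an explicit snapshot of the intermediate quiver recorded after each block. Away from the junctions, a block $\underline{\mu}_v$ reproduces the single-chain behaviour of Remark \ref{The case of one vertical chain}: when the running mutation reaches a vertex $k$, the preceding mutations of the block have turned $k$ into a sink among the non-frozen vertices, with its vertical neighbours pointing into it and its frozen arrow still pointing away, so $k$ is green. I would prove by induction, most naturally on the number $n$ of junctions with the pure-chain configuration of the Remark as the base, a closed description of the quiver after each run: the just-processed vertices are red, and the oblique arrow incident to the processed chain has been relocated so that the remaining unprocessed vertices together with the frozen vertices again form a quiver of the same $\mathcal{Q}^N$ shape on fewer vertices. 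This is what allows the two-chain problem to collapse to a strictly smaller instance.

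The main obstacle will be the bookkeeping at the junction vertices $v^i_{a_\ell}$ and $v^j_{b_\ell}$, where an oblique arrow meets a chain. There the mutation rule both introduces new arrows, coming from the two-arrow paths through the junction, and deletes $2$-cycles, so the local picture genuinely differs from the pure-chain case and must be checked by hand. I expect to verify, for each junction configuration occurring in Figure \ref{A Quiver of two vertical chains}(b), that once the relevant oblique arrow has been absorbed the mutated vertex is still a non-frozen sink, hence green, and that after the run the oblique arrow has been moved to join the next pair of chain segments exactly as prescribed by Definition \ref{definition: partial order}. The precise placement of the junction indices $a_\ell, b_\ell$ in that definition is what makes these local checks go through, which is why the order cannot be chosen carelessly.

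Finally, with green-ness established at every step, maximality is read off from the terminal snapshot: after the last block $\underline{\mu}_{v^i_{a_{n+1}}}$ every original vertex $v^i_\ell$ and $v^j_\ell$ has all of its frozen-incident arrows pointing into it, so each is red, and therefore $\underline{\mu}$ is a maximal green sequence of $Q^2$.
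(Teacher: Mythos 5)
Your overall architecture --- block decomposition along the partial order of Definition \ref{definition: partial order}, explicit snapshots of the intermediate framed quiver, hand-checks at the junctions, single-chain behaviour away from them via Remark \ref{The case of one vertical chain}, and reading off maximality from the terminal picture --- is the same as the paper's. The gap is in your proposed inductive mechanism: induction on the number $n$ of junctions with the invariant that after each run ``the remaining unprocessed vertices together with the frozen vertices again form a quiver of the same $\mathcal{Q}^N$ shape on fewer vertices.'' That invariant is not literally true, so the two-chain problem does not collapse to a strictly smaller instance of the same lemma. Already for the chain $1\leftarrow 2$: after $\mu_1$ the green vertex $2$ carries \emph{two} outgoing frozen arrows, $2\to 2'$ and $2\to 1'$, so from the second step onward the green-plus-frozen part is never again a framed quiver $\widehat{Q'}$ of a smaller quiver --- frozen arrows are permuted and multiplied, never reset. (Your phrase ``its frozen arrow still pointing away,'' in the singular, suggests the same underestimate of the frozen bookkeeping.)

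A second, related problem is that the red part is not inert and cannot be discarded from the induction hypothesis. In the same $A_2$ example, the later mutation at the green vertex $1$ deletes the frozen arrow $2'\to 2$ at the already-red vertex $2$; and redness is not automatically preserved along green sequences (vertex $1$ turns red after $\mu_1$ and green again after $\mu_2$), so your clause ``red and remain so'' is a genuine obligation that can only be discharged by knowing, at every step, the arrows among red, green and frozen vertices. This is why the paper's induction hypothesis records the \emph{entire} intermediate quiver: the four configurations of Figure \ref{claim figure at v p}, including the red tails of both chains and the parameters $x,y$ controlling the extra arrows near the junctions, proved by induction on the single mutation step $k$ rather than on $n$. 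Your plan can be repaired by strengthening the invariant to exactly such a closed-form snapshot --- but at that point it coincides with the paper's proof; as written, the collapse step would fail.
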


\begin{proof}
Let
\[
\underline{\mu}= \mu_{v_s}\circ \mu_{v_{s-1}} \circ \dots \circ \mu_{v_1}.
\]
Then $s=\frac{1}{2}\big(a_{n+1}(1+a_{n+1})+b_n(1+b_n)\big)$. In the following diagrams, for simplicity, we omit the frozen vertices. Set $a_0=b_0=0$. For $1\leq k \leq s$, we claim that:
\begin{itemize}
  \item [(a)] if
\[
\mu_{v_k}\circ \mu_{v_{k-1}} \circ \dots \circ \mu_{v_1}=
\mu_{ v^{i}_{\ell} }\circ\mu_{ v^{i}_{\ell-1} }\circ\cdots\circ\mu_{ v^{i}_{1} }\circ\underline{\mu}_{ v^{i}_{r} }\circ\cdots\circ\underline{\mu}_{ v^{i}_{a_{q}+1} }\underline{\mu}_{ v^{j}_{b_{q}} }\circ\cdots\underline{\mu}_{ v^{i}_1 },
\]
$0 \leq q \leq n$, $a_q\leq r< a_{q+1}$, and 
\begin{align*}
&1\leq\ell<a_{q+1}-r-1,\ \text{or} \\
&a_{q+t}-r\leq \ell< a_{q+t+1}-r-1\quad \text{for}\ 1\leq t\leq n-1-q,\ \text{or} \\
&a_{q+t}-r\leq \ell \leq a_{q+t+1}-r-1\quad \text{for}\ t=n-q,
\end{align*}
then $\mu_{v_k}\circ \mu_{v_{k-1}} \circ \dots \circ \mu_{v_1}(\widehat{Q^2})$ is the quiver shown in Figure \ref{claim figure at v p}(a), where
\[
\mathbf{v^{j}}=
\begin{cases}
    \textcolor{green}{v^{j}_{b_{q+t}-b_{q}}}&\text{if } \ell+1 \leq a_n-r, \\
    \textcolor{red}{v^{j}_{b_{n}-b_{q}+1}}&\text{if }  \ell+1  > a_n-r,
\end{cases}\quad  x=1-\delta_{\mathbf{v^{j}},\textcolor{red}{v^{j}_{b_{n}-b_{q}+1}}},
\quad y=1-\delta_{r,a_q},
\]
$\delta$ is the Kronecker delta;
  \item [(b)] if
\[
\mu_{v_k}\circ \mu_{v_{k-1}} \circ \dots \circ \mu_{v_1}=
\mu_{ v^{i}_{\ell} }\circ\mu_{ v^{i}_{\ell-1} }\circ\cdots\circ\mu_{ v^{i}_{1} }\circ\underline{\mu}_{ v^{i}_{r} }\circ\cdots\circ\underline{\mu}_{ v^{i}_{a_{q}+1} }\underline{\mu}_{ v^{j}_{b_{q}} }\circ\cdots\circ\underline{\mu}_{ v^{i}_1 },
\]
$0 \leq q \leq n$, $a_q\leq r< a_{q+1}$, $\ell= a_{q+t+1}-r-1$ for $0\leq t\leq n-1-q$, then $\mu_{v_k}\circ \mu_{v_{k-1}} \circ \dots \circ \mu_{v_1}(\widehat{Q^2})$ is the quiver shown in Figure \ref{claim figure at v p}(b),
where $y=1-\delta_{r,a_q}$;
  \item [(c)] if
\[
\mu_{v_k}\circ \mu_{v_{k-1}} \circ \dots \circ \mu_{v_1}=
\mu_{ v^{j}_{\ell} }\circ\mu_{ v^{j}_{\ell-1} }\circ\cdots\circ\mu_{ v^{j}_{1} }\circ\underline{\mu}_{ v^{j}_{r} }\circ\cdots\circ\underline{\mu}_{ v^{j}_{b_{q-1}+1} }\circ\underline{\mu}_{ v^{i}_{a_{q}} }\circ\cdots\circ\underline{\mu}_{ v^{i}_1 },
\]
$1 \leq q \leq n$, $b_{q-1} \leq r< b_{q}$, and
\begin{align*}
&1\leq\ell<b_{q}-r-1,\ \text{or} \\
&b_{q+t-1}-r \leq \ell < b_{q+t}-r-1\quad \text{for} \ 1\leq t\leq n-q-1,\ \text{or} \\
&b_{q+t-1}-r \leq \ell \leq b_{q+t}-r-1\quad \text{for} \ t= n-q,
 \end{align*}
then $\mu_{v_k}\circ \mu_{v_{k-1}} \circ \dots \circ \mu_{v_1}(\widehat{Q^2})$ is the quiver shown in Figure \ref{claim figure at v p}(c);
  \item [(d)] if
\[
\mu_{v_k}\circ \mu_{v_{k-1}} \circ \dots \circ \mu_{v_1}=
\mu_{ v^{j}_{\ell} }\circ\mu_{ v^{j}_{\ell-1} }\circ\cdots\circ\mu_{ v^{j}_{1} }\circ\underline{\mu}_{ v^{j}_{r} }\circ\cdots\circ\underline{\mu}_{ v^{j}_{b_{q-1}+1} }\circ\underline{\mu}_{ v^{i}_{a_{q}} }\circ\cdots\circ\underline{\mu}_{ v^{i}_1 },
\]
$1 \leq q \leq n$, $b_{q-1} \leq r< b_{q}$,  $\ell = b_{q+t}-r-1$ for $0\leq t\leq n-q-1$, then $\mu_{v_k}\circ \mu_{v_{k-1}} \circ \dots \circ \mu_{v_1}(\widehat{Q^2})$ is the quiver shown in Figure \ref{claim figure at v p}(d).
\end{itemize}

\begin{figure}
\resizebox{1\textwidth}{.8\height}{
\begin{minipage}[b]{0.25\textwidth}
\scalebox{0.4}{\xymatrix@R=10pt@C=0pt{
{\text{\huge \textcolor{green}{$v^{i}_1$} }}  &&   \\
\vdots \ar[u] & & {\text{\huge \textcolor{green}{$ v^{j}_1$} }} \\
{\text{\huge \textcolor{green}{$ v^{i}_{a_{q+1}-r-1}$} }}  \ar[u] \ar[rrd]&& \vdots  \ar[u]  \\
\vdots\ar[u] &&{\text{\huge \textcolor{green}{$v^{j}_{b_{q+1}-b_{q}} $} }}  \ar[lld] \ar[u]\\
{\text{\huge \textcolor{green}{$v^{i}_{a_{q+2}-r-1}$} }} \ar[u]\ar[rrd] & &\vdots\ar[u]\\
\vdots \ar[u] & \vdots & {\text{\huge \textcolor{green}{$ v^{j}_{b_{q+2}-b_{q}}$} }} \ar[u] \\
{\text{\huge \textcolor{green}{$v^{i}_{a_{q+t}-r-1}$} }}  \ar[rrd] \ar[u] & \vdots   &\vdots \ar[u]\\
\vdots\ar[u] &  &  {\text{\huge {$\mathbf{v^{j}}$} }}  \ar[u]\ar[lld]\ar[lldddd]^{\text{\huge $x$ }}   \\
{\text{\huge \textcolor{red}{$v^{i}_{\ell}$} }} \ar[d]  \ar[u] & &  \\
{\text{\huge \textcolor{green}{$v^{i}_{\ell+1}$} }} \ar[rruu] & &  \vdots\ar[uu] \\
\vdots \ar[u]&  &   \\
{\text{\huge \textcolor{green}{$v^{i}_{a_{q+t+1}-r}$} }} \ar[rrd] \ar[u]& &  \\
\vdots\ar[u]& &  {\text{\huge \textcolor{green}{$v^{j}_{b_{q+t+1}-b_{q}}$} }} \ar[uuu]\ar[lld] \\
{\text{\huge \textcolor{green}{$v^{i}_{a_{q+t+2}-r}$} }} \ar[rrd] \ar[u] &  &\vdots\ar[u]  \\
\vdots\ar[u]&\vdots & {\text{\huge \textcolor{green}{$v^{j}_{b_{q+t+2}-b_{q}}$} }}  \ar[u]   \\
{\text{\huge \textcolor{green}{$ v^{i}_{a_{n-1}-r}$} }} \ar[rrd]  \ar[u]  & \vdots  &\vdots\ar[u] \\
\vdots \ar[u]  & &  {\text{\huge \textcolor{green}{$ v^{j}_{b_{n-1}-b_{q}}$} }} \ar[u]\ar[lld] \\
{\text{\huge \textcolor{green}{$ v^{i}_{a_{n}-r}$} }}  \ar[rrdd]  \ar[u]  &  &\vdots\ar[u] \\
\vdots \ar[u] & &  {\text{\huge \textcolor{green}{$ v^{j}_{b_{n}-b_{q}}$} }} \ar[u] \\
{\text{\huge \textcolor{green}{$ v^{i}_{a_{n+1}-r}$} }} \ar[u] & &  {\text{\huge \textcolor{red}{$v^{j}_{b_{n}-b_{q}+1}$} }} \ar[u] \ar[dd] \ar[lld]_{{\text{\huge {$y $} }} }\\
{\text{\huge \textcolor{red}{$v^{i}_{a_{n+1}-r+1}$} }} \ar[u]\ar[d]  &  &  \\
\vdots\ar[d] & &  \vdots \ar[dd] \\
{\text{\huge \textcolor{red}{$ v^{i}_{a_{n+1}-a_{q}+1}$} }} \ar[d] \ar[rruuu]_{{\text{\huge {$y $} }}} &  & \\
\vdots\ar[d] & & {\text{\huge \textcolor{red}{$  v^{j}_{b_{n}-b_{q-1}+1}$} }}  \ar[llu]\ar[d]\\
{\text{\huge \textcolor{red}{$ v^{i}_{a_{n+1}-a_{q-1}+1} $} }} \ar[rru]\ar[d] &  &\vdots \ar[d] \\
\vdots\ar[d] & &  {\text{\huge \textcolor{red}{$v^{j}_{b_{n}-b_{q-2}+1}$} }} \ar[llu]\ar[d] \\
{\text{\huge \textcolor{red}{$ v^{i}_{a_{n+1}-a_{2}+1}$} }} \ar[d] &  &\vdots \ar[d] \\
\vdots\ar[d] & &  {\text{\huge \textcolor{red}{$v^{j}_{b_{n}-b_{1}+1}$} }} \ar[llu]\ar[d] \\
{\text{\huge \textcolor{red}{$v^{i}_{a_{n+1}-a_{1}+1}$} }} \ar[rru]\ar[d] &  &\vdots \ar[d] \\
\vdots\ar[d] & &  {\text{\huge \textcolor{red}{$ v^{j}_{b_{n}}$} }}  \\
{\text{\huge \textcolor{red}{$ v^{i}_{a_{n+1}} $} }}  & &  \\
}}
\caption*{(a)}
\end{minipage}\begin{minipage}[b]{0.25\textwidth}
\scalebox{0.4}{\xymatrix@R=10pt@C=0pt{
{\text{\huge \textcolor{green}{$v^{i}_1$} }}  &&   \\
\vdots \ar[u] & & {\text{\huge \textcolor{green}{$ v^{j}_1$} }} \\
{\text{\huge \textcolor{green}{$ v^{i}_{a_{q+1}-r-1}$} }}  \ar[u] \ar[rrd]&& \vdots  \ar[u]  \\
\vdots\ar[u] &&{\text{\huge \textcolor{green}{$v^{j}_{b_{q+1}-b_{q}} $} }}  \ar[lld] \ar[u]\\
{\text{\huge \textcolor{green}{$v^{i}_{a_{q+2}-r-1}$} }} \ar[u]\ar[rrd] & &\vdots\ar[u]\\
\vdots \ar[u] & \vdots & {\text{\huge \textcolor{green}{$ v^{j}_{b_{q+2}-b_{q}}$} }} \ar[u] \\
{\text{\huge \textcolor{green}{$v^{i}_{a_{q+t}-r-1}$} }}  \ar[rrd] \ar[u] & \vdots   &\vdots \ar[u]\\
&  &  {\text{\huge \textcolor{green}{$v^{j}_{b_{q+t}-b_{q}}$} }}  \ar[u]\ar[lldd]   \\
\vdots\ar[uu] & &  \\
{\text{\huge \textcolor{red}{$v^{i}_{\ell}$} }} \ar[dd]\ar[u] & &  \vdots\ar[uu] \\
&  &   \\
{\text{\huge \textcolor{green}{$v^{i}_{a_{q+t+1}-r}$} }} \ar[rrd] & &  \\
\vdots\ar[u]& &  {\text{\huge \textcolor{green}{$v^{j}_{b_{q+t+1}-b_{q}}$} }} \ar[uuu]\ar[lld] \\
{\text{\huge \textcolor{green}{$v^{i}_{a_{q+t+2}-r}$} }} \ar[rrd] \ar[u] &  &\vdots\ar[u]  \\
\vdots\ar[u]&\vdots & {\text{\huge \textcolor{green}{$v^{j}_{b_{q+t+2}-b_{q}}$} }}  \ar[u]   \\
{\text{\huge \textcolor{green}{$ v^{i}_{a_{n-1}-r}$} }} \ar[rrd]  \ar[u]  & \vdots  &\vdots\ar[u] \\
\vdots \ar[u]  & &  {\text{\huge \textcolor{green}{$ v^{j}_{b_{n-1}-b_{q}}$} }} \ar[u]\ar[lld] \\
{\text{\huge \textcolor{green}{$ v^{i}_{a_{n}-r}$} }}  \ar[rrdd]  \ar[u]  &  &\vdots\ar[u] \\
\vdots \ar[u] & &  {\text{\huge \textcolor{green}{$ v^{j}_{b_{n}-b_{q}}$} }} \ar[u] \\
{\text{\huge \textcolor{green}{$ v^{i}_{a_{n+1}-r}$} }} \ar[u] & &  {\text{\huge \textcolor{red}{$v^{j}_{b_{n}-b_{q}+1}$} }} \ar[u] \ar[dd] \ar[lld]_{{\text{\huge {$ y $} }}}\\
{\text{\huge \textcolor{red}{$v^{i}_{a_{n+1}-r+1} $} }}\ar[u]\ar[d]  &  &  \\
\vdots\ar[d] & &  \vdots \ar[dd] \\
{\text{\huge \textcolor{red}{$ v^{i}_{a_{n+1}-a_{q}+1}$} }} \ar[d] \ar[rruuu]_{{\text{\huge {$y$} }}} &  & \\
\vdots\ar[d] & & {\text{\huge \textcolor{red}{$  v^{j}_{b_{n}-b_{q-1}+1}$} }}  \ar[llu]\ar[d]\\
{\text{\huge \textcolor{red}{$ v^{i}_{a_{n+1}-a_{q-1}+1} $} }} \ar[rru]\ar[d] &  &\vdots \ar[d] \\
\vdots\ar[d] & &  {\text{\huge \textcolor{red}{$v^{j}_{b_{n}-b_{q-2}+1}$} }} \ar[llu]\ar[d] \\
{\text{\huge \textcolor{red}{$ v^{i}_{a_{n+1}-a_{2}+1}$} }} \ar[d] &  &\vdots \ar[d] \\
\vdots\ar[d] & &  {\text{\huge \textcolor{red}{$v^{j}_{b_{n}-b_{1}+1}$} }} \ar[llu]\ar[d] \\
{\text{\huge \textcolor{red}{$v^{i}_{a_{n+1}-a_{1}+1}$} }} \ar[rru]\ar[d] &  &\vdots \ar[d] \\
\vdots\ar[d] & &  {\text{\huge \textcolor{red}{$ v^{j}_{b_{n}}$} }}  \\
{\text{\huge \textcolor{red}{$ v^{i}_{a_{n+1}} $} }}  &  &  \\
}}
\vspace{0.15cm}
\caption*{(b)}
\end{minipage}\begin{minipage}[b]{0.25\textwidth}
\scalebox{0.4}{\xymatrix@R=10pt@C=0pt{
{\text{\huge \textcolor{green}{$v^{i}_1$} }}  &&   \\
\vdots \ar[u] & & {\text{\huge \textcolor{green}{$ v^{j}_1$} }} \\
{\text{\huge \textcolor{green}{$v^{i}_{a_{q+1}-a_{q}}$} }}  \ar[u] \ar[rrddd]&& \vdots  \ar[u]  \\
\vdots\ar[u] &&{\text{\huge \textcolor{green}{$ v^{j}_{b_{q}-r-1}$} }}   \ar[llu] \ar[u]\\
{\text{\huge \textcolor{green}{$v^{i}_{a_{q+2}-a_{q}}$} }}\ar[u] & &\vdots\ar[u]\\
\vdots \ar[u] & \vdots & {\text{\huge \textcolor{green}{$ v^{j}_{b_{q+1}-r-1}$} }}\ar[llu]\ar[u] \\
{\text{\huge \textcolor{green}{$v^{i}_{a_{q+t}-a_{q}}$} }}\ar[rrddd]\ar[rrdddddd] \ar[u] & \vdots   &\vdots \ar[u]\\
&  &  {\text{\huge \textcolor{green}{$v^{j}_{b_{q+t-1}-r-1}$} }} \ar[u]\ar[llu]   \\
\vdots\ar[uu] & & \vdots\ar[u] \\
& & {\text{\huge \textcolor{red}{$ v^{j}_{\ell}$} }}\ar[d]\ar[u] \\
&  &   {\text{\huge \textcolor{green}{$v^{j}_{\ell+1}$} }}\ar[lluuuu] \\
{\text{\huge \textcolor{green}{$v^{i}_{a_{q+t+1}-a_{q}} $} }} \ar[uuu]\ar[rrddd]  & &  \vdots\ar[u]\\
\vdots\ar[u]& &  {\text{\huge \textcolor{green}{$v^{j}_{b_{q+t}-r}$} }}\ar[u]\ar[llu] \\
{\text{\huge \textcolor{green}{$v^{i}_{a_{q+t+2}-a_{q}}$} }}   \ar[u] &  &\vdots\ar[u]  \\
\vdots\ar[u]&\vdots & {\text{\huge \textcolor{green}{$v^{j}_{b_{q+t+1}-r}$} }} \ar[u] \ar[llu]  \\
{\text{\huge \textcolor{green}{$v^{i}_{a_{n}-a_{q}}$} }} \ar[rrddddd] \ar[u]  & \vdots  &\vdots\ar[u] \\
\vdots \ar[u]  & &  {\text{\huge \textcolor{green}{$ v^{j}_{b_{n-1}-r}$} }}\ar[u]\ar[llu] \\
{\text{\huge \textcolor{green}{$v^{i}_{a_{n+1}-a_{q}}$} }} \ar[u]  &  &\vdots\ar[u] \\
{\text{\huge \textcolor{red}{$ v^{i}_{a_{n+1}-a_{q}+1}$} }} \ar[u]\ar[dd]  & & {\text{\huge \textcolor{green}{$v^{j}_{b_{n}-r}  $} }}\ar[u] \\
  & &  \\
\vdots\ar[d] & &  {\text{\huge \textcolor{red}{$ v^{j}_{b_{n}-r+1}$} }} \ar[uu]\ar[d]\\
{\text{\huge \textcolor{red}{$ v^{i}_{a_{n+1}-a_{q-1}+1} $} }} \ar[dd] \ar[rrdd] &  & \vdots\ar[dd] \\
 & &   \\
\vdots\ar[d] & & {\text{\huge \textcolor{red}{$  v^{j}_{b_{n}-b_{q-1}+1}$} }}  \ar[lluuuuu]\ar[d]\\
{\text{\huge \textcolor{red}{$v^{i}_{a_{n+1}-a_{q-2}+1}$} }}\ar[rrdd]\ar[dd] &  &\vdots \ar[dd] \\
 & &   \\
\vdots\ar[d] & &  {\text{\huge \textcolor{red}{$v^{j}_{b_{n}-b_{q-2}+1}$} }}\ar[lluuuuu]\ar[d] \\
{\text{\huge \textcolor{red}{$v^{i}_{a_{n+1}-a_{1}+1}$} }} \ar[rrdd]\ar[dd] &  &\vdots \ar[dd] \\
& &  \\
\vdots\ar[dd] & &  {\text{\huge \textcolor{red}{$v^{j}_{b_{n}-b_{1}+1}$} }} \ar[d]  \\
&  &\vdots \ar[d] \\
{\text{\huge \textcolor{red}{$v^{i}_{a_{n+1}}$} }}     & &  {\text{\huge \textcolor{red}{$ v^{j}_{b_{n}}$} }}  \\
}}
\vspace{0.2cm}
\caption*{(c)}
\end{minipage}\begin{minipage}[b]{0.25\textwidth}
\scalebox{0.4}{\xymatrix@R=10pt@C=0pt{
{\text{\huge \textcolor{green}{$v^{i}_1$} }}  &&   \\
\vdots \ar[u] & & {\text{\huge \textcolor{green}{$ v^{j}_1$} }} \\
{\text{\huge \textcolor{green}{$v^{i}_{a_{q+1}-a_{q}}$} }}  \ar[u] \ar[rrddd]&& \vdots  \ar[u]  \\
\vdots\ar[u] &&{\text{\huge \textcolor{green}{$ v^{j}_{b_{q}-r-1}$} }}   \ar[llu] \ar[u]\\
{\text{\huge \textcolor{green}{$v^{i}_{a_{q+2}-a_{q}}$} }}\ar[u] & &\vdots\ar[u]\\
\vdots \ar[u] & \vdots & {\text{\huge \textcolor{green}{$ v^{j}_{b_{q+1}-r-1}$} }}\ar[llu]\ar[u] \\
{\text{\huge \textcolor{green}{$v^{i}_{a_{q+t}-a_{q}}$} }}\ar[rrdddd] \ar[u] & \vdots   &\vdots \ar[u]\\
&  &  {\text{\huge \textcolor{green}{$v^{j}_{b_{q+t-1}-r-1}$} }} \ar[u]\ar[llu]   \\
\vdots\ar[uu] & & \vdots\ar[u] \\
& & \\
&  &  {\text{\huge \textcolor{red}{$ v^{j}_{\ell}$} }}\ar[dd]\ar[uu] \\
{\text{\huge \textcolor{green}{$v^{i}_{a_{q+t+1}-a_{q}} $} }} \ar[uuu]\ar[rrddd]  & &   \\
\vdots\ar[u]& &  {\text{\huge \textcolor{green}{$v^{j}_{b_{q+t}-r}$} }} \ar[llu] \\
{\text{\huge \textcolor{green}{$v^{i}_{a_{q+t+2}-a_{q}}$} }}   \ar[u] &  &\vdots\ar[u]  \\
\vdots\ar[u]&\vdots & {\text{\huge \textcolor{green}{$v^{j}_{b_{q+t+1}-r}$} }} \ar[u] \ar[llu]  \\
{\text{\huge \textcolor{green}{$v^{i}_{a_{n}-a_{q}}$} }} \ar[rrddddd] \ar[u]  & \vdots  &\vdots\ar[u] \\
\vdots \ar[u]  & &  {\text{\huge \textcolor{green}{$ v^{j}_{b_{n-1}-r}$} }}\ar[u]\ar[llu] \\
{\text{\huge \textcolor{green}{$v^{i}_{a_{n+1}-a_{q}}$} }} \ar[u]  &  &\vdots\ar[u] \\
{\text{\huge \textcolor{red}{$ v^{i}_{a_{n+1}-a_{q}+1}$} }} \ar[u]\ar[dd] & & {\text{\huge \textcolor{green}{$v^{j}_{b_{n}-r}  $} }}\ar[u] \\
 & &   \\
\vdots\ar[d] & &  {\text{\huge \textcolor{red}{$ v^{j}_{b_{n}-r+1}$} }}\ar[uu] \ar[d]\\
{\text{\huge \textcolor{red}{$ v^{i}_{a_{n+1}-a_{q-1}+1} $} }} \ar[dd] \ar[rrdd] &  & \vdots\ar[dd] \\
 & &  \\
\vdots\ar[d] & & {\text{\huge \textcolor{red}{$  v^{j}_{b_{n}-b_{q-1}+1}$} }}  \ar[lluuuuu]\ar[d]\\
{\text{\huge \textcolor{red}{$v^{i}_{a_{n+1}-a_{q-2}+1}$} }}\ar[rrdd]\ar[dd] &  &\vdots \ar[dd] \\
 & &   \\
\vdots\ar[d] & &  {\text{\huge \textcolor{red}{$v^{j}_{b_{n}-b_{q-2}+1}$} }}\ar[lluuuuu]\ar[d] \\
{\text{\huge \textcolor{red}{$v^{i}_{a_{n+1}-a_{1}+1}$} }} \ar[rrdd]\ar[dd] &  &\vdots \ar[dd] \\
  & &  \\
\vdots\ar[dd] & &  {\text{\huge \textcolor{red}{$v^{j}_{b_{n}-b_{1}+1}$} }} \ar[d]  \\
&  &\vdots \ar[d] \\
{\text{\huge \textcolor{red}{$v^{i}_{a_{n+1}}$} }}     & &  {\text{\huge \textcolor{red}{$ v^{j}_{b_{n}}$} }}  \\
}}
\vspace{0.45cm}
\caption*{(d)}
\end{minipage}}
\caption{The quiver $\mu_{v_k}\circ \mu_{v_{k-1}} \circ \dots \circ \mu_{v_1}(\widehat{Q^2})$. }\label{claim figure at v p}
\end{figure}

By our claim, if we do not consider the frozen vertices, then the arrows that are incident to the vertex $v_k$ in the quiver $\mu_{v_{k-1}}  \circ \mu_{v_{k-2}} \circ \cdots \circ \mu_{v_1} (\widehat{Q^2})$ form a full subquiver of the quiver
\[
 \scalebox{0.5}{\xymatrix{
 {\text{\large { $\alpha_1$ } }} \ar[d]&\\
 {\text{\large { $v_{k}$ } }}  \ar[r]& {\text{\large { $\beta$ } }} \\
 {\text{\large { $\alpha_2$ } }} \ar[u]&
}}
\]
such that $\alpha_2 \rightarrow v_{k} \rightarrow \alpha_1$ is a full subquiver of one vertical chain of $Q^2$, and $\beta$ is a vertex of another vertical chain of $Q^2$.
Then we have:
\begin{itemize}
\item[(1)] the arrows connecting the vertices that belong to the same vertical chain as $v_{k}$ to $v_{k}$ point toward $v_{k}$;
\item[(2)] the arrow connecting a vertex of other vertical chain to $v_{k}$ points away $v_{k }$.
\end{itemize}
Moreover, since each mutable vertex $v_k$ is a green vertex, we have:
\begin{itemize}
\item[(3)] the arrows connecting the frozen vertices to $v_k$ point away $v_k$.
\end{itemize}
Define
\begin{gather}
\begin{align*}
Q^1:= v^{i}_1 \leftarrow  v^{i}_2 \leftarrow \dots \leftarrow  v^{i}_{a_{n+1}}\quad(\text{respectively},~Q^1:= v^{j}_1 \leftarrow  v^{j}_2 \leftarrow \dots \leftarrow  v^{j}_{b_n}).
\end{align*}
\end{gather}
It follows from (1), (2) and (3) that the full subquiver of
\[
\underline{\mu}(\widehat{Q^2})
\]
with the vertex set $\widehat{Q^1_0}=\{v^{i}_{\ell}, {v^{i}_{\ell}}'\mid \ell=1,2,\dots,a_{n+1}\}$ (respectively, $\widehat{Q^1_0}=\{v^{j}_{\ell},{v^{j}_{\ell}}'\mid \ell=1,2,\dots,b_{n}\}$) is the same as  quiver
\[
\underline{\mu}\vert_{Q^1_0 } (\widehat{Q^1}),
\]
where $\underline{\mu}\vert_{ Q^1_0}$ is a mutation sequence obtained from $\underline{\mu}$ by deleting the vertices that do not belong to $\{v^{i}_{\ell}\mid \ell=1,2,\dots,a_{n+1}\}$ (respectively, $\{v^{j}_{\ell}\mid \ell=1,2,\dots,b_{n}\}$). We know that
\begin{align*}
\underline{\mu}\vert_{ Q^1_0 }&=\underline{\mu}_{v^{i}_{a_{n+1}}}\circ \underline{\mu}_{v^{i}_{a_{n+1}-1}}\circ\dots\circ\underline{\mu}_{v^{i}_{1}}
(\text{respectively},\, \underline{\mu}\vert_{ Q^1_0 }=\underline{\mu}_{v^{j}_{b_{n}}}\circ \underline{\mu}_{v^{j}_{b_{n}-1}}\circ\dots\circ\underline{\mu}_{v^{j}_{1}})
\end{align*}
is a maximal green sequence of $Q^1$. Moreover, there is no arrow connecting the vertices in $\{v^{j}_{\ell}\mid \ell=1,2,\dots,b_{n}\}$ (respectively, $\{v^{i}_{\ell} \mid \ell=1,2,\dots,a_{n+1}\}$) to the vertices in $\{{v^{i}_{\ell}}'\mid \ell=1,2,\dots,a_{n+1}\}$ (respectively, $\{{v^{j}_{\ell}}'\mid \ell=1,2,\dots,b_{n}\}$). 
Therefore, our result holds if the claim is true.

We prove our claim by induction on $k$. 

{\bf Case 1.} When $k=1$.
If $a_1>1$, since $v^{i}_1$ is a sink in $Q^2$, we can directly deduce that the quiver $\mu_{v^{i}_1}(\widehat{Q^2})$ is the desired quiver.
If $a_1=1$, after mutating the vertex $v^{i}_1$, the quiver $\mu_{v^{i}_1}(\widehat{Q^2})$ is obtained, as required, see Figure \ref{case 1 of a1=1}.

\begin{figure}
\hskip 0.5 cm
\begin{minipage}[t]{0.42\textwidth}
 \scalebox{0.4}{\xymatrix{
{\text{\huge \textcolor{green}{$ v^{i}_{a_{1}}$} }}\ar[rrdd]&& {\text{\huge \textcolor{green}{$ v^{j}_1$} }} && {\text{\huge \textcolor{red}{$v^{i}_{a_{1}}$} }}\ar[d]&& {\text{\huge \textcolor{green}{$ v^{j}_1$} }} \\
{\text{\huge \textcolor{green}{$ v^{i}_{a_{1}+1} $} }}\ar[u]  && \vdots  \ar[u] && {\text{\huge \textcolor{green}{$v^{i}_{a_{1}+1}$} }}\ar[rrd]  && \vdots  \ar[u] \\
\vdots\ar[u] &&{\text{\huge \textcolor{green}{$v^{j}_{b_{1}} $} }} \ar[lld] \ar[u]&\xrightarrow{\substack{  {\text{\huge  {$\mu_{ v^{i}_{a_1} }$} }}  }}& \vdots\ar[u] &&{\text{\huge \textcolor{green}{$v^{j}_{b_{1}} $} }}\ar[lluu] \ar[lld] \ar[u]\\
{\text{\huge \textcolor{green}{$v^{i}_{a_{2}}  $} }}  \ar[u]\ar[rrd]  &&\vdots\ar[u] &&   {\text{\huge \textcolor{green}{$v^{i}_{a_{2}}  $} }}\ar[u]\ar[rrd] &&\vdots\ar[u] \\
\vdots\ar[u]  && {\text{\huge \textcolor{green}{$v^{j}_{b_{2}} $} }}\ar[u] &&   \vdots\ar[u]&&{\text{\huge \textcolor{green}{$v^{j}_{b_{2}} $} }}\ar[u] \\
 &&\vdots\ar[u] &\text{\huge(a)}&   &&\vdots\ar[u] \\
}}
\end{minipage}
\hskip 0.5 cm
\begin{minipage}[t]{0.42\textwidth}
 \scalebox{0.4}{\xymatrix{
{\text{\huge \textcolor{green}{$ v^{i}_{a_{1}}$} }} \ar[rrdd]&& {\text{\huge \textcolor{green}{$ v^{j}_1$} }} && {\text{\huge \textcolor{red}{$v^{i}_{a_{1}}$} }}\ar[ddd]&& {\text{\huge \textcolor{green}{$ v^{j}_1$} }} \\
&& \vdots  \ar[u] && && \vdots  \ar[u] \\
&&{\text{\huge \textcolor{green}{$v^{j}_{b_{1}} $} }} \ar[lld] \ar[u]& \xrightarrow{\substack{  {\text{\huge  {$\mu_{ v^{i}_{a_1} }$} }}  }} &  &&{\text{\huge \textcolor{green}{$v^{j}_{b_{1}} $} }}\ar[lluu]  \ar[u]\\
{\text{\huge \textcolor{green}{$v^{i}_{a_{2}}  $} }}\ar[uuu]\ar[rrd]  &&\vdots\ar[u] &&   {\text{\huge \textcolor{green}{$v^{i}_{a_{2}}  $} }} \ar[rrd] &&\vdots\ar[u] \\
\vdots\ar[u]  &&{\text{\huge \textcolor{green}{$v^{j}_{b_{2}} $} }}\ar[u] &&   \vdots\ar[u]&&{\text{\huge \textcolor{green}{$v^{j}_{b_{2}} $} }}\ar[u] \\
 &&\vdots\ar[u] &\text{\huge(b)}&   &&\vdots\ar[u] \\
}}
\end{minipage}
\caption{ (a) is the mutation at  $v^{i}_{a_1}$ in $\widehat{Q^2}$, where $a_{1}=1$, $a_1<a_2-1$;  (b) is the mutation at  $v^{i}_{a_1}$ in $\widehat{Q^2}$, where $a_{1}=1$, $a_1=a_2-1$.}\label{case 1 of a1=1}
\end{figure}
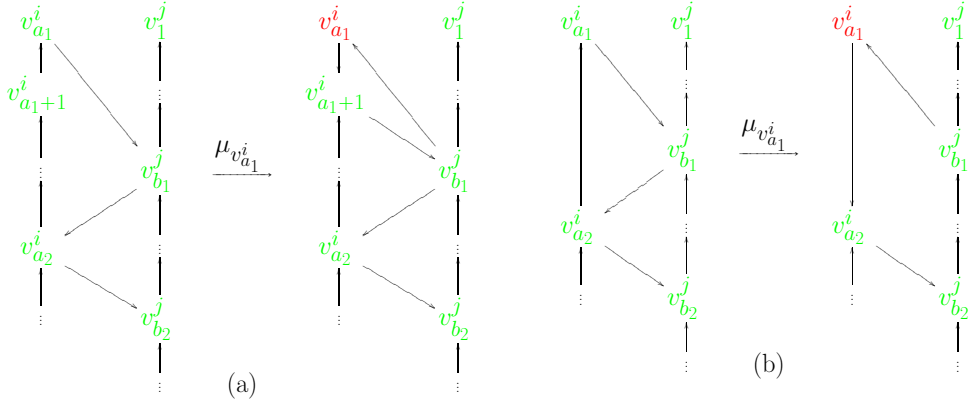

{\bf Case 2.} Suppose that for $1\leq k\leq s-1$, the quiver $\mu_{v_k}\circ \mu_{v_{k-1}} \circ \dots \circ \mu_{v_1}(\widehat{Q^2})$ satisfies our claim. By induction, we need to prove that our result holds for $\mu_{v_{k+1}}\circ \mu_{v_{k}} \circ \dots \circ \mu_{v_1}(\widehat{Q^2})$. By assumption, the quiver $\mu_{v_k}\circ \mu_{v_{k-1}} \circ \dots \circ \mu_{v_1}(\widehat{Q^2})$ is one of the quivers shown in Figure \ref{claim figure at v p}. When $\mu_{v_k}\circ \mu_{v_{k-1}} \circ \dots \circ \mu_{v_1}(\widehat{Q^2})$ is the quiver shown in Figure \ref{claim figure at v p}(a), after mutating the vertex $ v_{k+1}$, the quiver $\mu_{v_{k+1}}\circ \mu_{v_{k}} \circ \dots \circ \mu_{v_1}(\widehat{Q^2})$ is obtained, as required, see Figure \ref{claim figure at v k+1 (a)}. When $\mu_{v_k}\circ \mu_{v_{k-1}} \circ \dots \circ \mu_{v_1}(\widehat{Q^2})$ is the quiver (b), (c), or (d) of Figure \ref{claim figure at v p}, after mutating the vertex $v_{k+1}$, the obtained quiver $\mu_{v_{k+1}}\circ \mu_{v_{k}} \circ \dots \circ \mu_{v_1}(\widehat{Q^2})$ is still desired. The proof is complete.
\end{proof}
\begin{figure}
\resizebox{1\textwidth}{.8\height}{
\begin{minipage}[b]{0.5\textwidth}
\scalebox{0.35}{\xymatrix@R=10pt@C=0pt{
{\text{\huge \textcolor{green}{$v^{i}_1$} }}  &&   &&{\text{\huge \textcolor{green}{$v^{i}_1$} }}  &&\\
 \vdots \ar[u] & & {\text{\huge \textcolor{green}{$ v^{j}_1$} }} && \vdots \ar[u] & & {\text{\huge \textcolor{green}{$ v^{j}_1$} }} \\
{\text{\huge \textcolor{green}{$ v^{i}_{a_{q+1}-r-1}$} }}  \ar[u] \ar[rrd]&& \vdots  \ar[u]  &&{\text{\huge \textcolor{green}{$ v^{i}_{a_{q+1}-r-1}$} }}  \ar[u] \ar[rrd]&& \vdots  \ar[u]\\
 \vdots\ar[u] &&{\text{\huge \textcolor{green}{$v^{j}_{b_{q+1}-b_{q}} $} }}  \ar[lld] \ar[u]&&\vdots\ar[u] &&{\text{\huge \textcolor{green}{$v^{j}_{b_{q+1}-b_{q}} $} }}  \ar[lld] \ar[u]\\
 {\text{\huge \textcolor{green}{$v^{i}_{a_{q+2}-r-1}$} }} \ar[u]\ar[rrd] & &\vdots\ar[u]&&{\text{\huge \textcolor{green}{$v^{i}_{a_{q+2}-r-1}$} }} \ar[u]\ar[rrd] & &\vdots\ar[u]\\
\vdots \ar[u] & \vdots & {\text{\huge \textcolor{green}{$ v^{j}_{b_{q+2}-b_{q}}$} }} \ar[u] &&\vdots \ar[u] & \vdots & {\text{\huge \textcolor{green}{$ v^{j}_{b_{q+2}-b_{q}}$} }} \ar[u]\\
  {\text{\huge \textcolor{green}{$v^{i}_{a_{q+t}-r-1}$} }}  \ar[rrd] \ar[u] & \vdots   &\vdots \ar[u]&&  {\text{\huge \textcolor{green}{$v^{i}_{a_{q+t}-r-1}$} }}  \ar[rrd] \ar[u] & \vdots   &\vdots \ar[u]\\
\vdots\ar[u] &  &  {\text{\huge {$\mathbf{v^{j}}$} }}  \ar[u]\ar[lld]\ar[llddddd]^{\text{\huge $x$ }}   &&\vdots\ar[u] &  &  {\text{\huge {$\mathbf{v^{j}}$} }}  \ar[u]\ar[lldd]\ar[llddddd]^{\text{\huge $x$ }}  \\
{\text{\huge \textcolor{red}{$v^{i}_{\ell}$} }} \ar[d]  \ar[u] & &  &&{\text{\huge \textcolor{green}{$ v^{i}_{\ell} $} }}   \ar[u]\\
{\text{\huge \textcolor{green}{$v^{i}_{\ell+1}$} }} \ar[rruu] & & &&{\text{\huge \textcolor{red}{$ v^{i}_{\ell+1} $} }}\ar[d]  \ar[u]\\
{\text{\huge \textcolor{green}{$ v^{i}_{\ell+2} $} }} \ar[u]&  &  \vdots \ar[uuu] &&   {\text{\huge \textcolor{green}{$ v^{i}_{\ell+2} $} }} \ar[rruuu] &  &  \vdots \ar[uuu]\\
\vdots \ar[u]&  &   &&\vdots \ar[u]\\
{\text{\huge \textcolor{green}{$v^{i}_{a_{q+t+1}-r}$} }} \ar[rrd] \ar[u]& &  && {\text{\huge \textcolor{green}{$v^{i}_{a_{q+t+1}-r}$} }} \ar[rrd] \ar[u]\\
\vdots\ar[u]& &  {\text{\huge \textcolor{green}{$v^{j}_{b_{q+t+1}-b_{q}}$} }} \ar[uuu]\ar[lld] && \vdots\ar[u]& &  {\text{\huge \textcolor{green}{$v^{j}_{b_{q+t+1}-b_{q}}$} }} \ar[uuu]\ar[lld] \\
{\text{\huge \textcolor{green}{$v^{i}_{a_{q+t+2}-r}$} }} \ar[rrd] \ar[u] &  &\vdots\ar[u]  & \xrightarrow{\text{\huge{$\mu_{v_{k+1}}$}}} &{\text{\huge \textcolor{green}{$v^{i}_{a_{q+t+2}-r}$} }} \ar[rrd] \ar[u] &  &\vdots\ar[u]\\
\vdots\ar[u]&\vdots & {\text{\huge \textcolor{green}{$v^{j}_{b_{q+t+2}-b_{q}}$} }}  \ar[u]   &&\vdots\ar[u]&\vdots & {\text{\huge \textcolor{green}{$v^{j}_{b_{q+t+2}-b_{q}}$} }}  \ar[u]\\
{\text{\huge \textcolor{green}{$ v^{i}_{a_{n-1}-r}$} }} \ar[rrd]  \ar[u]  & \vdots  &\vdots\ar[u] &&{\text{\huge \textcolor{green}{$ v^{i}_{a_{n-1}-r}$} }} \ar[rrd]  \ar[u]  & \vdots  &\vdots\ar[u]\\
\vdots \ar[u]  & &  {\text{\huge \textcolor{green}{$ v^{j}_{b_{n-1}-b_{q}}$} }} \ar[u]\ar[lld] &&\vdots \ar[u]  & &  {\text{\huge \textcolor{green}{$ v^{j}_{b_{n-1}-b_{q}}$} }} \ar[u]\ar[lld] \\
{\text{\huge \textcolor{green}{$ v^{i}_{a_{n}-r}$} }}  \ar[rrdd]  \ar[u]  &  &\vdots\ar[u] &&{\text{\huge \textcolor{green}{$ v^{i}_{a_{n}-r}$} }}  \ar[rrdd]  \ar[u]  &  &\vdots\ar[u]\\
\vdots \ar[u] & &  {\text{\huge \textcolor{green}{$ v^{j}_{b_{n}-b_{q}}$} }} \ar[u] &&\vdots \ar[u] & &  {\text{\huge \textcolor{green}{$ v^{j}_{b_{n}-b_{q}}$} }} \ar[u]\\
 {\text{\huge \textcolor{green}{$ v^{i}_{a_{n+1}-r}$} }} \ar[u] & &  {\text{\huge \textcolor{red}{$v^{j}_{b_{n}-b_{q}+1}$} }} \ar[u] \ar[dd] \ar[lld]_{{\text{\huge {$y $} }}}&&{\text{\huge \textcolor{green}{$ v^{i}_{a_{n+1}-r}$} }} \ar[u] & &  {\text{\huge \textcolor{red}{$v^{j}_{b_{n}-b_{q}+1}$} }} \ar[u] \ar[dd] \ar[lld]_{{\text{\huge {$ y $} }}}\\
{\text{\huge \textcolor{red}{$v^{i}_{a_{n+1}-r+1}$} }} \ar[u]\ar[d]  &  &  &&{\text{\huge \textcolor{red}{$v^{i}_{a_{n+1}-r+1} $} }}\ar[u]\ar[d]\\
\vdots\ar[d] & &  \vdots \ar[dd]&&\vdots\ar[d] & &  \vdots \ar[dd] \\
 {\text{\huge \textcolor{red}{$ v^{i}_{a_{n+1}-a_{q}+1}$} }} \ar[d] \ar[rruuu]_{{\text{\huge {$ y $} }}} &  &&&{\text{\huge \textcolor{red}{$ v^{i}_{a_{n+1}-a_{q}+1}$} }} \ar[d] \ar[rruuu]_{{\text{\huge {$ y $} }}}  \\
 \vdots\ar[d] & & {\text{\huge \textcolor{red}{$  v^{j}_{b_{n}-b_{q-1}+1}$} }}  \ar[llu]\ar[d]&&\vdots\ar[d] & & {\text{\huge \textcolor{red}{$  v^{j}_{b_{n}-b_{q-1}+1}$} }}  \ar[llu]\ar[d]\\
  {\text{\huge \textcolor{red}{$ v^{i}_{a_{n+1}-a_{q-1}+1} $} }} \ar[rru]\ar[d] &  &\vdots \ar[d] && {\text{\huge \textcolor{red}{$ v^{i}_{a_{n+1}-a_{q-1}+1} $} }} \ar[rru]\ar[d] &  &\vdots \ar[d]\\
  \vdots\ar[d] & &  {\text{\huge \textcolor{red}{$v^{j}_{b_{n}-b_{q-2}+1}$} }} \ar[llu]\ar[d] &&\vdots\ar[d] & &  {\text{\huge \textcolor{red}{$v^{j}_{b_{n}-b_{q-2}+1}$} }} \ar[llu]\ar[d] \\
{\text{\huge \textcolor{red}{$ v^{i}_{a_{n+1}-a_{2}+1}$} }} \ar[d] &  &\vdots \ar[d]&&  {\text{\huge \textcolor{red}{$ v^{i}_{a_{n+1}-a_{2}+1}$} }} \ar[d] &  &\vdots \ar[d]\\
\vdots\ar[d] & & {\text{\huge \textcolor{red}{$v^{j}_{b_{n}-b_{1}+1}$} }} \ar[llu]\ar[d] &&\vdots\ar[d] & & {\text{\huge \textcolor{red}{$v^{j}_{b_{n}-b_{1}+1}$} }} \ar[llu]\ar[d] \\
{\text{\huge \textcolor{red}{$v^{i}_{a_{n+1}-a_{1}+1}$} }} \ar[rru]\ar[d] &  &\vdots \ar[d] &&{\text{\huge \textcolor{red}{$v^{i}_{a_{n+1}-a_{1}+1}$} }} \ar[rru]\ar[d] &  &\vdots \ar[d]\\
\vdots\ar[d] & &  {\text{\huge \textcolor{red}{$ v^{j}_{b_{n}}$} }}  &&\vdots\ar[d] & &  {\text{\huge \textcolor{red}{$ v^{j}_{b_{n}}$} }}  \\
{\text{\huge \textcolor{red}{$ v^{i}_{a_{n+1}} $} }}  & & &&{\text{\huge \textcolor{red}{$ v^{i}_{a_{n+1}} $} }}  &
}}
\caption*{(a)}
\end{minipage}\begin{minipage}[b]{0.5\textwidth}
\scalebox{0.35}{\xymatrix@R=10pt@C=0pt{
{\text{\huge \textcolor{green}{$v^{i}_1$} }}  &&   &&{\text{\huge \textcolor{green}{$v^{i}_1$} }}  &&\\
\vdots \ar[u] & & {\text{\huge \textcolor{green}{$ v^{j}_1$} }} && \vdots \ar[u] & & {\text{\huge \textcolor{green}{$ v^{j}_1$} }} \\
{\text{\huge \textcolor{green}{$ v^{i}_{a_{q+1}-r-1}$} }}  \ar[u] \ar[rrd]&& \vdots  \ar[u]  &&{\text{\huge \textcolor{green}{$ v^{i}_{a_{q+1}-r-1}$} }}  \ar[u] \ar[rrd]&& \vdots  \ar[u]\\
 \vdots\ar[u] &&{\text{\huge \textcolor{green}{$v^{j}_{b_{q+1}-b_{q}} $} }}  \ar[lld] \ar[u]&&\vdots\ar[u] &&{\text{\huge \textcolor{green}{$v^{j}_{b_{q+1}-b_{q}} $} }}  \ar[lld] \ar[u]\\
{\text{\huge \textcolor{green}{$v^{i}_{a_{q+2}-r-1}$} }} \ar[u]\ar[rrd] & &\vdots\ar[u]&&{\text{\huge \textcolor{green}{$v^{i}_{a_{q+2}-r-1}$} }} \ar[u]\ar[rrd] & &\vdots\ar[u]\\
\vdots \ar[u] & \vdots & {\text{\huge \textcolor{green}{$ v^{j}_{b_{q+2}-b_{q}}$} }} \ar[u] &&\vdots \ar[u] & \vdots & {\text{\huge \textcolor{green}{$ v^{j}_{b_{q+2}-b_{q}}$} }} \ar[u]\\
{\text{\huge \textcolor{green}{$v^{i}_{a_{q+t}-r-1}$} }}  \ar[rrd] \ar[u] & \vdots   &\vdots \ar[u]&&  {\text{\huge \textcolor{green}{$v^{i}_{a_{q+t}-r-1}$} }}  \ar[rrd] \ar[u] & \vdots   &\vdots \ar[u]\\
\vdots\ar[u] &  &  {\text{\huge {$\mathbf{v^{j}}$} }}  \ar[u]\ar[lld]\ar[llddddd]^{\text{\huge $x$ }}  &&\vdots\ar[u] &  &  {\text{\huge {$\mathbf{v^{j}}$} }}  \ar[u]\ar[lldd]  \\
{\text{\huge \textcolor{red}{$v^{i}_{\ell}$} }} \ar[d]  \ar[u] & &  &&{\text{\huge \textcolor{green}{$ v^{i}_{\ell} $} }}   \ar[u]\\
{\text{\huge \textcolor{green}{$v^{i}_{\ell+1}$} }} \ar[rruu] & & &&{\text{\huge \textcolor{red}{$ v^{i}_{\ell+1} $} }}\ar[ddd]  \ar[u]\\
&  &  \vdots \ar[uuu] &&   &  &  \vdots \ar[uuu]\\
&  &   && \\
{\text{\huge \textcolor{green}{$v^{i}_{a_{q+t+1}-r}$} }} \ar[rrd] \ar[uuu]& &  && {\text{\huge \textcolor{green}{$v^{i}_{a_{q+t+1}-r}$} }} \ar[rrd]\ar[rruuuuu]_{\text{\huge $1-x$ }}\\
\vdots\ar[u]& &  {\text{\huge \textcolor{green}{$v^{j}_{b_{q+t+1}-b_{q}}$} }} \ar[uuu]\ar[lld] && \vdots\ar[u]& &  {\text{\huge \textcolor{green}{$v^{j}_{b_{q+t+1}-b_{q}}$} }} \ar[uuu]\ar[lld] \\
{\text{\huge \textcolor{green}{$ v^{i}_{a_{q+t+2}-r} $} }}\ar[rrd] \ar[u] &  &\vdots\ar[u]  &\xrightarrow{\text{\huge{$\mu_{v_{k+1}}$}}} &{\text{\huge \textcolor{green}{$v^{i}_{a_{q+t+2}-r}$} }} \ar[rrd] \ar[u] &  &\vdots\ar[u]\\
\vdots\ar[u]&\vdots & {\text{\huge \textcolor{green}{$v^{j}_{b_{q+t+2}-b_{q}}$} }}  \ar[u]   &&\vdots\ar[u]&\vdots & {\text{\huge \textcolor{green}{$v^{j}_{b_{q+t+2}-b_{q}}$} }}  \ar[u]\\
{\text{\huge \textcolor{green}{$ v^{i}_{a_{n-1}-r}$} }} \ar[rrd]  \ar[u]  & \vdots  &\vdots\ar[u] &&{\text{\huge \textcolor{green}{$ v^{i}_{a_{n-1}-r}$} }} \ar[rrd]  \ar[u]  & \vdots  &\vdots\ar[u]\\
\vdots \ar[u]  & &  {\text{\huge \textcolor{green}{$ v^{j}_{b_{n-1}-b_{q}}$} }} \ar[u]\ar[lld] &&\vdots \ar[u]  & &  {\text{\huge \textcolor{green}{$ v^{j}_{b_{n-1}-b_{q}}$} }} \ar[u]\ar[lld] \\
{\text{\huge \textcolor{green}{$ v^{i}_{a_{n}-r}$} }}  \ar[rrdd]  \ar[u]  &  &\vdots\ar[u] &&{\text{\huge \textcolor{green}{$ v^{i}_{a_{n}-r}$} }}  \ar[rrdd]  \ar[u]  &  &\vdots\ar[u]\\
\vdots \ar[u] & &  {\text{\huge \textcolor{green}{$ v^{j}_{b_{n}-b_{q}}$} }} \ar[u] &&\vdots \ar[u] & &  {\text{\huge \textcolor{green}{$ v^{j}_{b_{n}-b_{q}}$} }} \ar[u]\\
 {\text{\huge \textcolor{green}{$ v^{i}_{a_{n+1}-r}$} }} \ar[u] & &  {\text{\huge \textcolor{red}{$v^{j}_{b_{n}-b_{q}+1}$} }} \ar[u] \ar[dd] \ar[lld]_{{\text{\huge {$ y$} }}}&&{\text{\huge \textcolor{green}{$ v^{i}_{a_{n+1}-r}$} }} \ar[u] & &  {\text{\huge \textcolor{red}{$v^{j}_{b_{n}-b_{q}+1}$} }} \ar[u] \ar[dd] \ar[lld]_{{\text{\huge {$y $} }}}\\
{\text{\huge \textcolor{red}{$v^{i}_{a_{n+1}-r+1} $} }}\ar[u]\ar[d]  &  &  &&{\text{\huge \textcolor{red}{$v^{i}_{a_{n+1}-r+1} $} }}\ar[u]\ar[d]\\
\vdots\ar[d] & &  \vdots \ar[dd]&&\vdots\ar[d] & &  \vdots \ar[dd] \\
 {\text{\huge \textcolor{red}{$ v^{i}_{a_{n+1}-a_{q}+1}$} }} \ar[d] \ar[rruuu]_{{\text{\huge {$y $} }}} &  &&&{\text{\huge \textcolor{red}{$ v^{i}_{a_{n+1}-a_{q}+1}$} }} \ar[d] \ar[rruuu]_{{\text{\huge {$y $} }}}  \\
 \vdots\ar[d] & & {\text{\huge \textcolor{red}{$  v^{j}_{b_{n}-b_{q-1}+1}$} }}  \ar[llu]\ar[d]&&\vdots\ar[d] & & {\text{\huge \textcolor{red}{$v^{j}_{b_{n}-b_{q-1}+1}$} }}  \ar[llu]\ar[d]\\
{\text{\huge \textcolor{red}{$ v^{i}_{a_{n+1}-a_{q-1}+1} $} }} \ar[rru]\ar[d] &  &\vdots \ar[d] && {\text{\huge \textcolor{red}{$ v^{i}_{a_{n+1}-a_{q-1}+1} $} }} \ar[rru]\ar[d] &  &\vdots \ar[d]\\
\vdots\ar[d] & &  {\text{\huge \textcolor{red}{$v^{j}_{b_{n}-b_{q-2}+1}$} }} \ar[llu]\ar[d] &&\vdots\ar[d] & &  {\text{\huge \textcolor{red}{$v^{j}_{b_{n}-b_{q-2}+1}$} }} \ar[llu]\ar[d] \\
{\text{\huge \textcolor{red}{$ v^{i}_{a_{n+1}-a_{2}+1}$} }} \ar[d] &  &\vdots \ar[d]&&  {\text{\huge \textcolor{red}{$ v^{i}_{a_{n+1}-a_{2}+1}$} }} \ar[d] &  &\vdots \ar[d]\\
\vdots\ar[d] & & {\text{\huge \textcolor{red}{$v^{j}_{b_{n}-b_{1}+1}$} }} \ar[llu]\ar[d] &&\vdots\ar[d] & & {\text{\huge \textcolor{red}{$v^{j}_{b_{n}-b_{1}+1}$} }} \ar[llu]\ar[d] \\
{\text{\huge \textcolor{red}{$v^{i}_{a_{n+1}-a_{1}+1}$} }} \ar[rru]\ar[d] &  &\vdots \ar[d] &&{\text{\huge \textcolor{red}{$v^{i}_{a_{n+1}-a_{1}+1}$} }} \ar[rru]\ar[d] &  &\vdots \ar[d]\\
\vdots\ar[d] & &  {\text{\huge \textcolor{red}{$ v^{j}_{b_{n}}$} }}  &&\vdots\ar[d] & &  {\text{\huge \textcolor{red}{$ v^{j}_{b_{n}}$} }}  \\
{\text{\huge \textcolor{red}{$ v^{i}_{a_{n+1}} $} }}  & & &&{\text{\huge \textcolor{red}{$ v^{i}_{a_{n+1}} $} }}  &
}}
\caption*{(b)}
\end{minipage}}
\caption{(a) is the mutation at $v_{k+1}$ in $\mu_{v_k}\circ \mu_{v_{k-1}} \circ \dots \circ \mu_{v_1}(\widehat{Q^2} )$, where $\ell+1<a_{q+t+1}-r-1$, $v_{k+1}=v^{i}_{\ell+1}$; (b) is the mutation at $v_{k+1}$ in $\mu_{v_k}\circ \mu_{v_{k-1}} \circ \dots \circ \mu_{v_1}(\widehat{Q^2} )$, where $\ell+1=a_{q+t+1}-r-1$, $v_{k+1}=v^{i}_{\ell+1}$. }\label{claim figure at v k+1 (a)}
\end{figure}

\begin{remark}\label{gamma is the quiver (a)}

When $Q^2$ is the quiver shown in Figure \ref{A Quiver of two vertical chains}(a). Similar with Lemma \ref{lemma of two vertical chains}, we can prove that the mutation sequence
\begin{gather}
\begin{align*}
\underline{\mu}:=&\underline{\mu}_{ v^{j}_{b_{n+1}} } \circ\underline{\mu}_{ v^{j}_{b_{n+1}-1} } \circ \cdots \circ \underline{\mu}_{ v^{j}_{b_{n}+1} }\circ
\underline{\mu}_{  v^{i}_{a_{n+1}} } \circ \underline{\mu}_{  v^{i}_{a_{n+1}-1} } \circ \cdots \circ \underline{\mu}_{ v^{i}_{a_{n}+1} }\circ
\underline{\mu}_{  v^{j}_{b_{n}} } \circ \underline{\mu}_{  v^{j}_{b_{n}-1} } \circ \cdots \circ \underline{\mu}_{  v^{j}_{b_{n-1}+1} }  \\
&\circ\underline{\mu}_{  v^{i}_{a_{n}} } \circ \underline{\mu}_{  v^{i}_{a_{n}-1} } \circ \cdots  \circ \underline{\mu}_{  v^{i}_{a_{n-1}+1} }\circ
 \cdots \circ \underline{\mu}_{  v^{j}_{b_{2}} }\circ \underline{\mu}_{  v^{j}_{b_{2}-1} } \circ\cdots \circ \underline{\mu}_{  v^{j}_{b_{1}+1} } \circ
\underline{\mu}_{  v^{i}_{a_2} }\circ \underline{\mu}_{  v^{i}_{a_2-1} } \circ \cdots \circ \underline{\mu}_{  v^{i}_{a_1+1} }\\
& \circ\underline{\mu}_{  v^{j}_{b_{1}} }\circ \underline{\mu}_{  v^{j}_{b_{1}-1}}\circ \cdots \circ  \underline{\mu}_{  v^{j}_{1} }\circ \underline{\mu}_{  v^{i}_{a_1} }\circ \underline{\mu}_{  v^{i}_{a_1-1} }\circ  \cdots \circ \underline{\mu}_{  v^{i}_{1} }
\end{align*}
\end{gather}
is a maximal green sequence of $Q^2$, and the arrows that are incident to the vertex about to perform mutation satisfy conditions (1), (2) and (3) in the proof of Lemma \ref{lemma of two vertical chains}.
\end{remark}

\subsection{Maximal green sequences for $\mathcal{Q}^N$ quivers}\label{Subsection: The proof of main theorem}

Let
\[
\{v^{i_1}_{a_1}, v^{i_2}_{a_2},\dots,v^{i_n}_{a_n}\}
\]
be the vertex set of $Q^N$ such that, for $1\leq j\leq n-1$, $v^{i_{j+1}}_{a_{j+1}} \ngtr v^{i_j}_{a_j}$ with respect to the partial order defined in Definition \ref{definition: partial order}. Now we are ready for the main result of this paper.

\begin{theorem}\label{Main Theorem: maximal green sequences}
The mutation sequence
\[
\underline{\mu}=\underline{\mu}_{v^{i_n}_{a_n}}\circ \underline{\mu}_{v^{i_{n-1}}_{a_{n-1}}} \circ \cdots \circ \underline{\mu}_{v^{i_1}_{a_1}}
\]
 is a maximal green sequence of $Q^N$.
\end{theorem}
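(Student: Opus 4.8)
The plan is to reduce the $N$-chain case to the two-chain Lemma \ref{lemma of two vertical chains} (and its mirror image, Remark \ref{gamma is the quiver (a)}) by exploiting the tree structure forced by condition (3) of Definition \ref{the definition of generalized HL-quiver}. Write $\underline{\mu} = \mu_{v_s} \circ \cdots \circ \mu_{v_1}$ for the full sequence. As in the proof of Lemma \ref{lemma of two vertical chains}, it suffices to show that for every $k$, just before mutating $v_k$ the arrows among mutable vertices incident to $v_k$ satisfy: every arrow from a vertex of the same vertical chain as $v_k$ points toward $v_k$, so that $v_k$ is a sink in its own chain; every oblique arrow joining $v_k$ to a vertex of another chain points away from $v_k$; and $v_k$ is green, so the arrows to frozen vertices point away from $v_k$. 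Greenness of each $v_k$ makes $\underline{\mu}$ a green sequence, and the endpoint analysis below makes it maximal.

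The key structural input is \emph{localization}. Regard the vertical chains as the nodes of a graph in which two chains are joined when some oblique arrow connects them; condition (3) says this graph is a tree. In particular, if chains $j_1$ and $j_2$ are both adjacent to a chain $i$, then $j_1$ and $j_2$ are not adjacent. I would first prove the following non-interference statement by induction on the step $k$: throughout $\underline{\mu}$, every arrow joining a vertex of chain $i$ to a vertex of an adjacent chain $j$ is created, reversed, or deleted only by mutations performed at vertices of $i \cup j$, and no arrow is ever created between two non-adjacent chains. This rests on the local configuration above: when $v^i_\ell$ is mutated it is a sink in chain $i$ with all oblique arrows outgoing, so the only two-paths through it are $v^i_{\ell\pm 1} \to v^i_\ell \to \beta$ with $\beta$ in a chain adjacent to $i$; no two-path $v^{j_1}_\bullet \to v^i_\ell \to v^{j_2}_\bullet$ exists, so no arrow between two distinct neighbors of $i$ is ever produced. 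Since neighbors of $i$ are pairwise non-adjacent, mutations in a third chain $j'$ adjacent to $i$ never touch arrows between $i$ and $j$. Hence the interactions of chain $i$ with its various neighbors evolve independently.

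Granting localization, I would restrict $\underline{\mu}$ to each adjacent pair $(i,j)$, namely the subsequence of blocks $\underline{\mu}_{v^c_\ell}$ with $c \in \{i,j\}$, applied to the two-chain full subquiver on $i \cup j$. Using again that the chains form a tree, the global order of Definition \ref{definition: partial order} restricts on $\{i,j\}$ to the (total) pairwise order, so this subsequence is exactly the descending two-chain sequence of Lemma \ref{lemma of two vertical chains} or Remark \ref{gamma is the quiver (a)}, according as the pair is of type (b) or (a) in Figure \ref{A Quiver of two vertical chains}. By localization the $(i,j)$-subquiver at each stage then equals the corresponding intermediate quiver of Figure \ref{claim figure at v p}. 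Reading off that figure for every adjacent pair meeting the chain of $v_k$, and using Remark \ref{The case of one vertical chain} for the purely within-chain arrows, yields precisely the three local conditions required in the first paragraph; this closes the induction and shows that $\underline{\mu}$ is a green sequence.

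Finally, for maximality I would restrict $\underline{\mu}(\widehat{Q^N})$ to a single chain $i$ together with its frozen vertices. By the two-chain analysis the relevant block subsequence restricts to the $Q^1$ maximal green sequence of that chain, and, as recorded at the end of the proof of Lemma \ref{lemma of two vertical chains}, no arrow ever joins a vertex of another chain to a frozen vertex of chain $i$; hence this restriction reproduces the framed $Q^1$ situation, in which all vertices of chain $i$ become red. Since this holds for every chain, all mutable vertices of $Q^N$ are red in $\underline{\mu}(\widehat{Q^N})$, so $\underline{\mu}$ is a maximal green sequence. I expect the main obstacle to be the rigorous proof of localization: one must track, simultaneously over the whole sequence, that no mutation couples two neighbors of a common chain and that each pairwise subquiver stays in lock-step with Figure \ref{claim figure at v p}, which is exactly where condition (3) and the sink/oblique-away configuration are used in tandem.
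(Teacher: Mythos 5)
Your proposal is correct and follows essentially the same route as the paper: the paper likewise proves, by induction, the local sink/oblique-outgoing/green configuration at every mutation step, invokes exactly the non-interference observation you call localization (justified by condition (3) of Definition \ref{the definition of generalized HL-quiver} and asserted in one sentence in the paper's proof), reduces each adjacent pair of chains to Lemma \ref{lemma of two vertical chains} and Remark \ref{gamma is the quiver (a)}, and concludes maximality by restricting $\underline{\mu}(\widehat{Q^N})$ to each chain together with its frozen vertices. The only difference is organizational: the paper inducts on $N$ by peeling off a leaf chain of the tree of vertical chains, whereas you induct on the mutation step with all pairwise restrictions tracked simultaneously --- mathematically the same argument.
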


\begin{proof}
Assume
\[
\underline{\mu}= \mu_{v_s}\circ \mu_{v_{s-1}} \circ \dots \circ \mu_{v_1}.
\]
We claim that, for $1 \leq \ell \leq s$, if we do not consider the frozen vertices, then the arrows that are incident to the vertex $v_\ell$ in the quiver $\mu_{v_{\ell-1}}  \circ \mu_{v_{\ell-2}} \circ \cdots \circ \mu_{v_1} (\widehat{Q^N})$ form a full subquiver of the quiver
\[
 \scalebox{0.52}{\xymatrix{
{\text{\large$\beta_2$}} & {\text{\large$\alpha_1$}}   \ar[d]&\\
\vdots& {\text{\large$v_{\ell}$}}  \ar[r]\ar[lu]\ar[ld] &  {\text{\large$\beta_1$}} \\
 {\text{\large$\beta_k$}} & {\text{\large$\alpha_2$}} \ar[u]&
}}
\]
such that $\alpha_2 \rightarrow v_{\ell} \rightarrow \alpha_1$ is a full subquiver of one of vertical chains of $Q^N$, and $\beta_1,\beta_2,\dots,\beta_k$ are vertices of other vertical chains. That is,
\begin{itemize}
\item[(1)] the arrows connecting the vertices that belong to the same vertical chain as $v_{\ell}$ to $v_{\ell}$ point toward $v_{\ell}$;
\item[(2)] the arrows connecting the vertices of other vertical chains to $v_{\ell}$ point away $v_{\ell}$.
\end{itemize}
Moreover, every mutable vertex is a green vertex, so we have
\begin{itemize}
\item[(3)]  in the quiver $\mu_{v_{\ell-1}}  \circ \mu_{v_{\ell-2}} \circ \cdots \circ \mu_{v_1} (\widehat{Q^N})$, the arrows connecting the frozen vertices to $v_\ell$ point away $v_\ell$.
\end{itemize}

We prove our claim by induction on the number $N$ of the vertical chains in $Q^N$. The case $N=1$ follows from Remark \ref{The case of one vertical chain}. Assume that our result holds for $N=n$, $n>1$, we prove it for $N=n+1$. According to Definition \ref{subsection: the definition of GHL-quiver}, it can be confirmed that there definitely exists a vertical chain $V$ which is exactly connected to one of the vertical chains in $Q^N$. We denote the portion obtained by removing the vertices in $V_0$ from $Q^N$, along with the arrows connecting to these vertices, as $Q^{N-1}$. Therefore, $Q^N$ can be viewed as being obtained by connecting a vertical chain of $Q^{N-1}$ to the vertical chain $V$, denote this vertical chain of $Q^{N-1}$ by $V'$. Then our theorem holds for $Q^{N-1}$ by induction. Since the arrows connecting $V$ and $V'$ must be in the form shown in Figure \ref{A Quiver of two vertical chains}, our claim follows from Lemma \ref{lemma of two vertical chains}, Remark \ref{gamma is the quiver (a)}, and the proof of Lemma \ref{lemma of two vertical chains}. Indeed, by the rules of quiver mutations, when we mutate any vertex in $\underline{\mu}$, the arrows connecting $V$ and $V'$ will not influence the arrows connecting $V$ and other vertical chains of $Q^{N-1}$, and vice versa.

Let $Q^1:=v^{i}_1 \leftarrow v^{i}_2 \leftarrow \dots \leftarrow v^{i}_{k_i}$ be a vertical chain of $Q^N$ with the vertex set $Q^1_0=\{v^{i}_{\ell}\mid\ell=1,2,\dots,k_i\}$. It follows from (1), (2) and (3) that the full subquiver of
\[
\mu_{v_s} \circ \mu_{v_{s-1}}\circ \dots \circ \mu_{v_1} (\widehat{Q^N})
\]
with the vertex set $\widehat{Q^1_0}=\{v^{i}_{\ell},{v^{i}_{\ell}}'\mid\ell=1,2,\dots,k_i\}$ is the same as the quiver
\[
\mu_{v_s} \circ \mu_{v_{s-1}}\circ \dots \circ \mu_{v_1}\vert_{Q^1_0}(\widehat{Q^1}),
\]
where the mutation sequence $\mu_{v_s} \circ \mu_{v_{s-1}}\circ \dots \circ \mu_{v_1}\vert_{Q^1_0}$ is obtained from the mutation sequence $\mu_{v_s} \circ \mu_{v_{s-1}}\circ \dots \circ \mu_{v_1}$ by deleting the vertices that do not belong to $Q^1_0$. Moreover, there is no arrow connecting the vertices in $\{v^{i}_{\ell} \mid\ell=1,2,\dots,k_i\}$ to the frozen vertices of $\widehat{Q^N_0}\setminus \{{v^{i}_{\ell}}'\mid\ell=1,2,\dots,k_i\}$, where $\widehat{Q^N_0}$ is the vertex set of $\widehat{Q^N}$. Since
\[
\mu_{v_s} \circ \mu_{v_{s-1}}\circ \dots \circ \mu_{v_1}\vert_{Q^1_0}=\underline{\mu}_{v^{i}_{k_i}}\circ\underline{\mu}_{v^{i}_{k_i-1}}\circ\dots\circ
\underline{\mu}_{v^{i}_{1}}
\]
is a maximal green sequence of $Q^1$, our result holds.
\end{proof}

\section{Special cases of $\mathcal{Q}^N$ quivers}\label{Section: Applications}
Many quivers that we are familiar with are $\mathcal{Q}^N$ quivers. In this section, We prove that any finite connected full subquiver of the quivers defined by Hernandez and Leclerc, arising in monoidal categorifications of cluster algebras, is a special case of $\mathcal{Q}^N$ quivers. Moreover, we prove that the trees of oriented cycles introduced by Garver and Musiker and the quivers in $\mu^A$ and $\mu^D$ can also be recognized as special cases of $\mathcal{Q}^N$ quivers.

\subsection{Quivers arising from monoidal categorifications}

Let $\mathfrak{g}$ be a complex simple finite-dimensional Lie algebra of rank $n$. It is well-known that $\mathfrak{g}$ is completely classified by the Dynkin diagrams. In this paper, we use the same labellings of Dynkin diagrams as the ones in \cite[Section 4.3]{K1990}. Let $C=(c_{ij})_{i, j\in I}$ be the Cartan matrix of $\mathfrak{g}$, where $I=\{1,2,\ldots,n\}$. There is a diagonal matrix $D=\mathrm{diag}(d_i \mid i\in I)$ with positive entries $d_i$ such that $B=DC=(b_{ij})_{i, j\in I}$ is a symmetric matrix.  We choose $D$ such that $\min\{d_i \mid i\in I\}=1$. Let $r= \max\{d_i \mid i\in I\}$. Then
\[
r=\begin{cases}
1 \quad&\text{if $C$ is of type $A_n$, $D_n$, $E_6$, $E_7$ or $E_8$},\\
2 \quad&\text{if $C$ is of type $B_n$, $C_n$ or $F_4$},\\
3 \quad&\text{if $C$ is of type $G_2$}.
\end{cases}
\]

Following \cite{HL16}, let $\tilde{G}$ be an infinite quiver with vertex set $\widetilde{V} =I\times \mathbb{Z}$ and arrows $((i,r)\rightarrow (j,s)) \iff (b_{ij}\neq0$, $s-d_j=r-d_i+b_{ij})$.  According to  \cite{HL16}, we know that $\tilde{G}$ has two isomorphic connected components, which are identical if we disregard the vertex labels. Here, we ignore the vertex labels and denote one of the connected components as $G$.  An example of the quiver $G$ of type $B_2$ and a finite full subquiver $\bar{G}$ of $G$ are given in Figure \ref{Figure:Examples of Gamma and Gamma xi in type B2}.
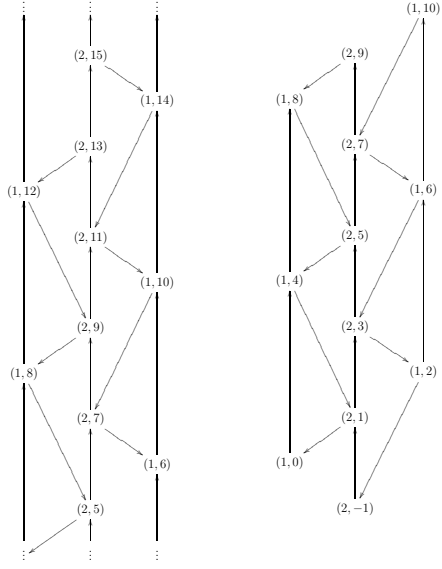
\begin{figure}
\centering
\scalebox{0.4}{
\begin{minipage}[t]{0.5\textwidth}
\xymatrix{
\vdots &\vdots & \vdots\\
& (2,15)\ar[rd] \ar[u]& \\
& & (1,14)\ar[uu]\ar[lddd] \\
& (2,13)\ar[ld]\ar[uu]  & \\
 (1,12) \ar[rddd]\ar[uuuu] &   & \\
  &(2,11) \ar[uu] \ar[rd]& \\
& &(1,10)\ar[lddd]\ar[uuuu]  \\
 &(2,9) \ar[ld]\ar[uu] \\
(1,8)\ar[rddd]\ar[uuuu]   & &    \\
 &(2,7)\ar[uu]  \ar[rd]  &   \\
  & & (1,6) \ar[uuuu]   \\
  & (2,5) \ar[ld]\ar[uu] &   \\
 \vdots\ar[uuuu] & \vdots \ar[u] & \vdots \ar[uu]}
\end{minipage}
\hskip 3cm
\begin{minipage}[t]{0.5\textwidth}
\xymatrix{
&   & (1,10)  \ar[lddd]  \\
&   (2,9) \ar[ld] & \\
(1,8) \ar[rddd]  & &  \\
 & (2,7)\ar[uu] \ar[rd]& &  \\
 & & (1,6) \ar[uuuu] \ar[lddd]  \\
  &(2,5) \ar[ld]\ar[uu]& &  \\
 (1,4) \ar[rddd]  \ar[uuuu] &&  \\
 &(2,3)\ar[uu] \ar[rd]& &  \\
&   & (1,2) \ar[uuuu] \ar[lddd]  \\
&  (2,1) \ar[ld]\ar[uu]& &  \\
(1,0)  \ar[uuuu] &&  \\
 & (2,-1) \ar[uu]& &  \\
}
\end{minipage}}
\caption{In type $B_2$, quiver $G$ (left) and a finite full subquiver $\bar{G}$ of $G$ (right).} \label{Figure:Examples of Gamma and Gamma xi in type B2}
\end{figure}

\begin{proposition}\label{HL quiver are GHL quiver}
 Let $\bar{G}$ be a finite connected full subquiver of $G$. Then $\bar{G}$ is a $\mathcal{Q}^N$ quiver.
\end{proposition}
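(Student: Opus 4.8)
The plan is to read the arrows of $G$ off the defining rule and then verify the three clauses of Definition~\ref{the definition of generalized HL-quiver} in turn. Taking $i=j$ in $((i,r)\to(j,s))\iff(b_{ij}\neq0,\ s-d_j=r-d_i+b_{ij})$ and using $b_{ii}=2d_i\neq0$ produces a \emph{vertical} arrow $(i,r)\to(i,r+2d_i)$ at every vertex, while taking $i\neq j$ adjacent (so $b_{ij}=d_ic_{ij}<0$) produces the \emph{oblique} arrows $(i,r)\to(j,\,r+d_j-d_i+b_{ij})$ together with the reversed family $(j,s)\to(i,\,s+d_i-d_j+b_{ij})$; in type $B_2$ these specialize to $(1,r)\to(2,r-3)$ and $(2,s)\to(1,s-1)$, reproducing Figure~\ref{Figure:Examples of Gamma and Gamma xi in type B2}. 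Since $\bar G$ is a full subquiver, all of its arrows are inherited from these.

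First I would produce the vertical chains. For each node $i$ I would group the vertices $(i,r)$ of $\bar G$ by the residue of $r$ modulo $2d_i$ and cut each group into its maximal runs of $r$-values differing by exactly $2d_i$; ordered by \emph{decreasing} $r$, each run is a chain $v^i_1\leftarrow v^i_2\leftarrow\cdots$, which is clause (1) and exhibits the number $N$ of chains. Clause (2) is then a short computation with the oblique formulas: two successive oblique steps come back to the starting column having decreased $r$ by $2b_{ij}$, so along the zigzag between a chain of $i$ and a chain of $j$ the chain-index strictly increases, stepping by $|c_{ij}|$ on the $i$-side and by $|c_{ji}|$ on the $j$-side; this is exactly the alternating pattern $v^i_{a_1}\to v^j_{b_1}\to v^i_{a_2}\to\cdots$ with $a_1<a_2<\cdots$ and $b_1<b_2<\cdots$ required by the definition, and in particular the oblique arrows between any two chains form a single zigzag path, as clause (2) demands.

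The real content is clause (3), which amounts to showing that the graph $\Gamma$ whose vertices are the vertical chains of $\bar G$ and whose edges record the presence of a connecting oblique arrow is a \emph{forest}. The key quantitative input I would establish is that a fixed chain of node $i$ is adjacent to exactly $|c_{ij}|$ chains of a neighbour $j$: the oblique arrows leaving that chain split into $|c_{ij}|$ interleaved zigzags (successive arrows of one zigzag advancing by $2d_i|c_{ij}|$ in $r$), and a residue computation modulo $2d_j$ --- using that the order of $2d_i$ in $\mathbb{Z}/2d_j\mathbb{Z}$ equals $|c_{ij}|$ because the smaller of $d_i,d_j$ divides the larger --- shows these zigzags land in $|c_{ij}|$ \emph{distinct} chains of $j$. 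Thus the adjacency multiplicity exceeds $1$ only on the short side of a bond, where $|c_{ij}|>1$.

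With this in hand I would finish by a rooted-tree argument. The Dynkin diagram of a finite-type simple Lie algebra is a tree carrying \emph{at most one} multiple bond, so I can root it at the short endpoint of that bond (at any node in the simply laced case), which arranges that across every edge the child $i$ satisfies $|c_{i,p(i)}|=1$, where $p(i)$ is its parent. Declaring the depth of a chain to be the tree-distance of its node from the root, every edge of $\Gamma$ joins chains whose depths differ by $1$, and by the multiplicity fact a chain of positive depth has at most one neighbour one level nearer the root. Orienting each edge toward smaller depth then gives out-degree at most $1$ along a strictly decreasing potential, which rules out cycles, so $\Gamma$ is a forest and clause (3) holds. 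I expect the residue bookkeeping behind the multiplicity count, together with the check that the unique multiple bond can always be oriented away from the root, to be the main obstacle; once they are in place, matching $\bar G$ to Definition~\ref{the definition of generalized HL-quiver} (and to the shapes in Figure~\ref{A Quiver of two vertical chains}) is routine.
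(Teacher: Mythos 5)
Your treatment of clauses (1) and (2) of Definition~\ref{the definition of generalized HL-quiver} is sound and coincides with the paper's proof: the maximal runs $(i,a)\leftarrow(i,a-2d_i)\leftarrow\cdots$ are the vertical chains, and the interval computation showing that the oblique arrows between a fixed pair of chains form a single alternating zigzag with increasing indices is exactly the paper's verification of clause (2). The gap is in your argument for clause (3), and it is fatal: the key lemma, that a fixed chain of node $i$ is adjacent to exactly $|c_{ij}|$ chains of a neighbour $j$, is a statement about the \emph{full infinite} component $G$ and is false in a finite full subquiver $\bar G$. Deleting a single vertex $(j,s)$ from $\bar G$ splits a column-$j$ run into two chains, each of which can remain adjacent to one and the same chain of node $i$. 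Simplest instance, in type $A_2$: take column $1$ complete from $(1,0)$ to $(1,8)$ and column $2$ consisting only of $(2,1)$ and $(2,5)$ (so $(2,3)$ is missing); then the unique chain of node $1$ is adjacent to \emph{two} chains of node $2$ although $|c_{12}|=1$. Your residue computation counts the zigzags leaving a chain inside $G$, not the number of distinct \emph{runs} of $\bar G$ they meet, so the bound ``at most one neighbour one level nearer the root'' fails and the out-degree/potential argument collapses.

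Worse, the conclusion you are driving at, that the chain-adjacency graph of the canonical column decomposition is a forest, is itself false, so the proof cannot be repaired by fixing the bookkeeping. In type $A_3$ take $\bar G_0=\{(1,0),(1,2),(1,4),(1,6),\,(3,0),(3,2),(3,4),(3,6),\,(2,1),(2,5)\}$: this is a connected full subquiver (obliques $(1,2)\to(2,1)\to(1,0)$, $(3,2)\to(2,1)\to(3,0)$, $(1,6)\to(2,5)\to(1,4)$, $(3,6)\to(2,5)\to(3,4)$, plus the verticals in columns $1$ and $3$). The canonical chains are $C_1$ (all of column $1$), $C_3$ (all of column $3$), $\{(2,1)\}$ and $\{(2,5)\}$, and the chain graph is the $4$-cycle $C_1$--$\{(2,1)\}$--$C_3$--$\{(2,5)\}$--$C_1$, violating clause (3) even though clause (2) holds for every pair. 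Note that this also exposes how thin the paper's own one-sentence disposal of clause (3) is (``Dynkin diagrams do not have cycles''): acyclicity of the Dynkin diagram only excludes chain cycles lying over pairwise distinct nodes, whereas here the cycle arises because a single node ($2$) contributes two chains. Your rooted-tree strategy is tied to the canonical decomposition and therefore inherits this failure; any complete argument has to either modify the chain decomposition in such configurations (for instance, allowing oblique arrows to serve as chain arrows, which changes which graph must be a tree) or otherwise control how split columns recombine, and nothing in your proposal does this.
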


\begin{proof}
In $\bar{G}$, we can view
\[
(i,a)\leftarrow (i,a-2d_i)\leftarrow \dots\leftarrow (i,a-2kd_{i})
\]
as a vertical chain, where $i \in I$, $(i,a-2\ell d_i)\in \bar{G}_0$ for $\ell=0,1,\dots,k$, and $(i,a+2d_i), (i,a-2(k+1)d_{i})\notin \bar{G}_0$, which satisfies the conditions of Definition \ref{the definition of generalized HL-quiver}(1).
The arrows connecting two vertical chains exactly form a path
\begin{align*}
&(i, a)\rightarrow (j,a +b_{ij}+d_{j}-d_{i})\rightarrow (i,a +2b_{ij})\rightarrow (j,a +3b_{ij}+d_{j}-d_{i})\rightarrow (i,a +4b_{ij}) \rightarrow \dots,
\end{align*}
where  $i, j\in I$, the vertices $i$ and $j$ are adjacent in Dynkin diagram, $a\in\mathbb{Z}$, which satisfy the conditions of Definition \ref{the definition of generalized HL-quiver}(2). It follows from the fact that Dynkin diagrams do not have cycles that the arrows connecting the vertical chains satisfy the conditions of Definition \ref{the definition of generalized HL-quiver}(3).
\end{proof}

\begin{example}
Let $Q^3$ be the quiver $\bar{G}$ shown in the right diagram of Figure \ref{Figure:Examples of Gamma and Gamma xi in type B2}. It follows that
\begin{gather}
\begin{align*}
(1,10)>(2,9)>(1,8)>(2,7)>(1,6)>(2,5)>(1,4)>(2,3)>(1,2)>(2,1)>(1,0)>(2,-1),
\end{align*}
\end{gather}
with respect to the partial order defined in Definition \ref{definition: partial order}. By Theorem \ref{Main Theorem: maximal green sequences}, we obtain a maximal green sequence of $Q^3$ as follows:
\begin{align*}
&\underline{\mu}_{(2,-1)}\circ\underline{\mu}_{(1,0)}\circ\underline{\mu}_{(2,1)}\circ\underline{\mu}_{(1,2)}\circ\underline{\mu}_{(2,3)}\circ\underline{\mu}_{(1,4)}\circ\underline{\mu}_{(2,5)}\circ\underline{\mu}_{(1,6)}\circ\underline{\mu}_{(2,7)}\circ\underline{\mu}_{(1,8)}\circ\underline{\mu}_{(2,9)}\circ\underline{\mu}_{(1,10)}\\
=&\mu_{(2,9)}\circ \mu_{(1,8)}\circ \mu_{(2,7)} \circ\mu_{(2,9)} \circ\mu_{(1,10)}\circ\mu_{(2,5)} \circ\mu_{(2,7)} \circ\mu_{(2,9)} \circ\mu_{(1,4)} \circ\mu_{(1,8)} \circ\mu_{(2,3)} \circ\\
&\mu_{(2,5)} \circ\mu_{(2,7)} \circ\mu_{(2,9)} \circ\mu_{(1,6)}\circ\mu_{(1,10)}\circ\mu_{(2,1)} \circ\mu_{(2,3)} \circ\mu_{(2,5)}\circ \mu_{(2,7)}\circ \mu_{(2,9)} \circ\mu_{(1,0)}\circ\\
&\mu_{(1,4)} \circ\mu_{(1,8)} \circ\mu_{(2,-1)} \circ\mu_{(2,1)} \circ\mu_{(2,3)} \circ\mu_{(2,5)} \circ\mu_{(2,7)} \circ\mu_{(2,9)} \circ\mu_{(1,2)}\circ\mu_{(1,6)}\circ\mu_{(1,10)}.
\end{align*}
\end{example}

\subsection{Trees of oriented cycles} \label{subsection: trees of cycles}
Garver and Musiker introduced the concept of trees of $3$-cycles, and generalized it to trees of oriented cycles in \cite{GM17}. For an irreducible quiver $Q$ that is mutation equivalent to an orientation of a type A Dynkin diagram, it can be obtained by gluing together a finite number $m$ of oriented $3$-cycles $\{S_i\mid i=1,2,\dots,m\}$ such that each oriented $3$-cycle shares a vertex with at most three other oriented $3$-cycles and the number of cycles in the underlying graph of $Q$ is $m$. Garver and Musiker proved that $Q$ is equivalent to a binary tree with vertex set $\{S_i\mid i=1,2,\dots,m\}$, where $S_i$ is connected to $S_j$ by an edge if and only if $S_i$ and $S_j$ share a vertex in $Q$, and $Q$ is called a \textit{tree of $3$-cycles}.

An irreducible quiver is called a \textit{tree of oriented cycles} if $Q$ is obtained by gluing together a finite number $m$ of oriented cycles $\{S_i\mid i=1,2,\dots,m\}$ such that each oriented 
$k$-cycle shares a vertex with at most $k$ other oriented cycles and the number of cycles in the underlying graph of $Q$ is $m$, where each cycle $S_i$ has length at least $3$. Note that each vertex in $Q$ can appear in at most two cycles, and every cycle in $Q$ is an oriented cycle.

\begin{proposition}\label{prop:tree of oriented cycles}
Trees of oriented cycles are $\mathcal{Q}^N$ quivers.
\end{proposition}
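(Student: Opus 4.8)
The plan is to induct on the number $m$ of oriented cycles glued together to build $Q$. The starting observation is structural: since the number of cycles in the underlying graph of $Q$ equals $m$, the \emph{cycle-gluing graph} (with nodes $S_1,\dots,S_m$ and an edge whenever two of the cycles share a vertex) must be acyclic, and since every vertex of $Q$ lies in at most two cycles, every gluing occurs at a single vertex. Hence this gluing graph is a tree; in particular, for $m\geq 2$ it has a leaf cycle that meets the rest of $Q$ in exactly one vertex. This leaf is the piece I will peel off at each inductive step.

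For the base case $m=1$, I realize a single oriented $k$-cycle $w_0\to w_1\to\cdots\to w_{k-1}\to w_0$ as a $Q^2$ quiver. I take the vertical chain $C\colon w_0\leftarrow w_{k-1}\leftarrow w_{k-2}\leftarrow\cdots\leftarrow w_2$, so that the arc $w_2\to w_3\to\cdots\to w_{k-1}\to w_0$ of the cycle becomes exactly the upward vertical arrows of $C$ demanded by Definition \ref{the definition of generalized HL-quiver}(1), and I take $D=\{w_1\}$ as a singleton chain. The two leftover arrows $w_0\to w_1$ and $w_1\to w_2$ then read as the single oblique path $v^C_1\to v^D_1\to v^C_{k-1}$, whose $C$-indices satisfy $1<k-1$; this meets Definition \ref{the definition of generalized HL-quiver}(2), and part (3) is vacuous with only two chains. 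Thus a single oriented cycle is a $Q^2$ quiver.

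For the inductive step, let $S$ be a leaf cycle meeting the rest of $Q$ at the single vertex $v$, and let $Q'$ be the tree of oriented cycles obtained by deleting the non-shared vertices of $S$; by induction $Q'$ is a $Q^N$ quiver. Let $C$ be the vertical chain of $Q'$ containing $v$, say $v=v^C_p$, and write $S\colon v\to s_1\to s_2\to\cdots\to s_{k-1}\to v$. I introduce one new vertical chain $D\colon s_{k-1}\leftarrow s_{k-2}\leftarrow\cdots\leftarrow s_1$, so that $s_1\to\cdots\to s_{k-1}$ becomes its upward vertical arrows, and I add the two oblique arrows $s_{k-1}\to v$ and $v\to s_1$. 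These form the single oblique path $v^D_1\to v^C_p\to v^D_{k-1}$ with strictly increasing $D$-indices $1<k-1$, so Definition \ref{the definition of generalized HL-quiver}(2) holds for the pair $(D,C)$; and since $D$ is joined only to $C$, attaching it adds a single new leaf to the chain-connection graph of $Q'$, which is a tree, so this graph remains a tree and Definition \ref{the definition of generalized HL-quiver}(3) is preserved. I want to stress that this realization is valid for \emph{every} position $p$ of $v$ inside $C$: the shared vertex only serves as the pivot of the new oblique zigzag and never has to be promoted to the head of a chain.

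The main obstacle is verifying that all three defining conditions survive the attaching step simultaneously, and this is exactly where the defining features of a tree of oriented cycles are used. Orientedness of $S$ is what lets me take \emph{both} of its arrows at $v$ to be oblique, so $v$ keeps its place in $C$ untouched; the at-most-two-cycles hypothesis is what guarantees $S$ is glued at a single vertex, so that only the path $v^D_1\to v^C_p\to v^D_{k-1}$ appears between $D$ and the rest of the quiver; and the acyclicity of the gluing graph is precisely what condition (3) requires. As a sanity check, each cycle contributes exactly one vertical chain beyond the initial singleton, so $N=m+1$ and the chain-connection tree mirrors the cycle-gluing tree. Once the proposition is established, Theorem \ref{Main Theorem: maximal green sequences} produces an explicit maximal green sequence for every tree of oriented cycles, which settles Problem \ref{Open problem1}.
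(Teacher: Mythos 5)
Your proposal is correct and is essentially the paper's own construction: the paper likewise places $\ell-1$ consecutive vertices of an initial cycle in one vertical chain with the remaining vertex as a singleton chain, and then, for each further cycle glued at a vertex $v$, keeps $v$ in its original chain and puts the other vertices of that cycle in a new chain of their own, repeating until $Q$ is exhausted. Your version merely makes the paper's ``repeat the above process'' rigorous by organizing it as an induction on the number of cycles via leaf-peeling of the gluing tree and by explicitly checking conditions (1)--(3) of Definition \ref{the definition of generalized HL-quiver}, which is a faithful elaboration rather than a different method.
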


\begin{proof}
Let $Q$ be a tree of oriented cycles, and let $Q'=v_1\rightarrow v_2 \rightarrow \dots \rightarrow v_\ell \rightarrow v_1$ be an $\ell$-cycle in $Q$. Then we put $v_1\rightarrow v_2 \rightarrow \dots \rightarrow v_{\ell-1}$ in a vertical chain of length $(\ell-1)$, and put $v_\ell$ in another chain. Assume that $Q''$ is another cycle in $Q$ and $Q'_0\cap Q''_0=v$. Then we put the vertices $Q''_0\backslash v$ in a vertical chain alone, and keep $v$ stay in the original vertical chain. Repeat the above process, $Q$ can always be seen as a $\mathcal{Q}^N$ quiver.
\end{proof}

\begin{example}
An example of tree of oriented cycles is given  in Figure \ref{example: a tree of oriented cycles}.
\begin{figure}
\centering
\scalebox{0.5}{\xymatrix{
&  & 6\ar[rd] & 1 \ar[rdd] &  \\
&  & 7\ar[u] & 2 \ar[u]\ar[ld]& &9 \ar[ld] \\
&11 \ar[rrd] & 8 \ar[u] & & 5\ar[ldd]\ar[rd] & \\
15  \ar[rd]&12 \ar[u] &   &3\ar[lldd]\ar[uu]&   & 10\ar[uu]\\
16 \ar[u] &13 \ar[ld] \ar[u]&   &4\ar[u]&   & \\
17 \ar[u] &14 \ar[u] &   & &   & \\
}}
\caption{A tree of oriented cycles.} \label{example: a tree of oriented cycles}
\end{figure}
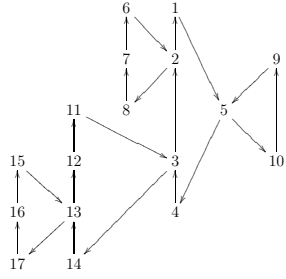
\end{example}

Theorem \ref{Main Theorem: maximal green sequences} and Proposition \ref{prop:tree of oriented cycles} provide a solution for the following open problem proposed by Garver and Musiker.

\begin{problem}\cite[Problem 8.3]{GM17}\label{Open problem1}
Find a construction of maximal green sequences for quivers that are trees of oriented cycles.
\end{problem}

\begin{example}
Let $Q^6$ be the quiver shown in Figure \ref{example: a tree of oriented cycles}. It follows that
\begin{gather}
\begin{align*}
&15>11>12>13>16>17>14, \quad 11>1>2>3>12>13>14>4, \\ &6>1>2>7>8>3>4, \quad 1>5>2>3>4, \quad 9>5>10,
\end{align*}
\end{gather}
with respect to the partial order defined in Section \ref{section: partial order}. By Theorem \ref{Main Theorem: maximal green sequences}, we obtain a maximal green sequence of $Q^6$ as follows:
\begin{align*}
&\underline{\mu}_{4}\circ\underline{\mu}_{14}\circ\underline{\mu}_{17}\circ\underline{\mu}_{16}\circ\underline{\mu}_{13}\circ\underline{\mu}_{12}\circ\underline{\mu}_{3}\circ\underline{\mu}_{8}\circ\underline{\mu}_{7}\circ\underline{\mu}_{2}\circ\underline{\mu}_{10}\circ\underline{\mu}_{5}\circ\underline{\mu}_{9}\circ \underline{\mu}_{1}\circ\underline{\mu}_{ 6}\circ\underline{\mu}_{ 11}\circ\underline{\mu}_{15}\\
=& \mu_{ 1}\circ\mu_{11 }\circ\mu_{ 15} \circ\mu_{ 16}\circ\mu_{ 15}\circ \mu_{ 12} \circ\mu_{11 }\circ\mu_{13 }\circ\mu_{ 12}\circ\mu_{11 } \circ\mu_{2 }\circ\mu_{1 }\circ \mu_{6 } \circ\mu_{ 7}\circ\mu_{6 }\circ\mu_{ 3}\circ \mu_{2 } \\
 &\circ\mu_{ 1}\circ\mu_{9 }\circ\mu_{5 }\circ\mu_{ 10} \circ\mu_{ 9}\circ\mu_{4 }\circ\mu_{ 3}\circ\mu_{2 }  \circ\mu_{ 1}\circ\mu_{8 }\circ\mu_{ 7}\circ \mu_{ 6} \circ\mu_{ 14}\circ\mu_{13 }\circ\mu_{ 12}\circ\mu_{11 } \circ\mu_{17}\\
 &\circ\mu_{16 }\circ\mu_{15}.
\end{align*}
\end{example}

In the following, we study a class of irreducible quivers whose cycles are all oriented, which is a generalization of trees of oriented cycles. It turns out that these irreducible quivers are $\mathcal{Q}^N$ quivers.

\begin{lemma}
Let $Q$ be an irreducible quiver such that each cycle in $Q$ is an oriented cycle. Then any two cycles in $Q$ can have at most one intersection.
\end{lemma}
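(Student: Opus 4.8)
The plan is to argue by contradiction. Suppose $Q$ contains two distinct cycles $C_1$ and $C_2$ sharing at least two vertices; I will manufacture from them a cycle of $Q$ that is not oriented, contradicting the hypothesis that every cycle of $Q$ is oriented. (I read ``at most one intersection'' as ``at most one common vertex''; this is the correct target, since two oriented triangles glued along an arrow already create a non-oriented $4$-cycle.) Recall that, as $Q$ has no loops or $2$-cycles, every cycle has length at least $3$, and by hypothesis each of $C_1$ and $C_2$ carries a consistent direction following its arrows.

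First I would locate a good common vertex. Let $S=(C_1)_0\cap(C_2)_0$ and assume $|S|\ge 2$. If at every vertex of $S$ the cycles $C_1$ and $C_2$ left along the same arrow, then following that common arrow repeatedly would force $C_1=C_2$; hence there is a vertex $u\in S$ at which $C_1$ and $C_2$ depart by two \emph{different} arrows. Starting at $u$, follow $C_1$ in its orientation to obtain a directed path $\pi_1$, and follow $C_2$ in its orientation to obtain a directed path $\pi_2$; each of these runs around its whole cycle, so they must meet again at a vertex of $S$. Let $z$ be the first vertex after $u$, in the order along $\pi_1$, that also lies on $\pi_2$, and truncate $\pi_1$ at $z$ and $\pi_2$ at its first visit to $z$. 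By the choice of $z$ the two truncated paths $P\subseteq C_1$ and $R\subseteq C_2$ are internally disjoint, they are both directed from $u$ to $z$, and they are distinct because they begin with different arrows at $u$.

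The resulting object is what I want: traversing $P$ forward and $R$ backward is a closed walk whose vertices are distinct, hence a cycle of $Q$, and it runs along the arrows on $P$ but against the arrows on $R$, so it is \emph{not} oriented. The only obstacle is the degenerate possibility that $P$ and $R$ are both single arrows $u\to z$, i.e. a pair of parallel arrows, in which case the construction returns a digon rather than a genuine cycle of length at least $3$. I would dispose of this using that $C_1$ and $C_2$ have length at least $3$: if the arcs from $u$ to $z$ are both single arrows, then the complementary arcs of $C_1$ and $C_2$ (from $z$ back to $u$) have length at least $2$, so replacing $P,R$ by this complementary codirected pair yields internally disjoint paths at least one of which has an interior vertex, and their union is a bona fide non-oriented cycle of length at least $3$.

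In every configuration, and regardless of whether $C_1$ and $C_2$ happen to share an arrow, this produces a non-oriented cycle in $Q$, which is impossible. Therefore $|S|\le 1$: any two cycles of $Q$ meet in at most one vertex, and in particular can never share an arrow, which is the assertion of the lemma. The essential point, and the only place real care is needed, is the combinatorial selection of $u$ and $z$ guaranteeing internally disjoint codirected arcs, together with the dismissal of the parallel-arrow degeneracy.
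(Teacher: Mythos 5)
Your proof follows the same broad strategy as the paper's: assume two oriented cycles share at least two vertices and manufacture a non-oriented cycle out of two internally disjoint arcs joining two common vertices. In fact your execution of the main step is tighter than the paper's. The paper simply asserts the existence of vertices $v_1,v_2$ and arcs $p_1\subseteq Q'$, $p_2\subseteq Q''$ each of which meets the \emph{other} cycle only in $v_1,v_2$; this two-sided avoidance amounts to finding a pair of common vertices that are consecutive in the cyclic order of $(Q')_0\cap(Q'')_0$ along \emph{both} cycles simultaneously, which is not automatic (two cyclic orders on five or more points need not share any adjacent pair), so the paper's configuration requires an argument it does not supply. Your first-divergence vertex $u$ and first-meeting vertex $z$ need only one-sided avoidance (the interior of $P$ avoids all of $\pi_2$, while the interior of $R$ merely avoids $u$ and $z$), which you verify correctly; and since your two arcs are codirected, the resulting cycle is non-oriented immediately, eliminating the paper's second case, where $p_1\cup p_2$ happens to be oriented and one must pass to the complementary arc $p_1'$ of $Q'$.

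The one flawed step is your disposal of the parallel-arrow degeneracy. When $P$ and $R$ are parallel arrows $u\to z$, your claim that the complementary arcs of $C_1$ and $C_2$ are internally disjoint is unjustified and false in general: the cycles may share further vertices, e.g.\ $C_1\colon u\to z\to a\to w\to b\to u$ and $C_2\colon u\to z\to c\to w\to d\to u$, where both complementary arcs pass through $w$, so their union is not a cycle at all. Nor can any detour of this kind succeed in the extreme sub-case where $C_1$ and $C_2$ differ \emph{only} in which of the two parallel arrows they use: there the only non-oriented cycle available is the pair of parallel arrows itself, traversed with one arrow and against the other, i.e.\ a length-two cycle of the underlying multigraph. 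The correct repair is therefore to observe at the outset either that a pair of parallel arrows already constitutes a non-oriented cycle of the underlying graph, or that the quivers under consideration have no multiple arrows (none of the paper's examples do, and $\mathcal{Q}^N$ quivers cannot); either way the degenerate case never arises, and your main construction always returns a genuine non-oriented cycle of length at least three. With that substitution your argument is complete --- and, for what it is worth, the paper's own proof does not address this degeneracy either.
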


\begin{proof}
Assume that there are two oriented cycles $Q'$ and $Q''$ that have $k$ intersections in $Q$ with $k\geq2$. Let $v_1$ and $v_2$ be two vertices such that $v_1 - a_1 - a_2 - \dots - a_{k_1} - v_2$ forms a path $p_1$ in the underlying graph of $Q'$, and $v_1 - b_1 - b_2 - \dots - b_{k_2} - v_2$ forms a path $p_2$ in the underlying graph of $Q''$, and moreover, $p_1$ (respectively, $p_2$) and $Q''$ (respectively, $Q'$) do not  intersect at any point other than $v_1$ and $v_2$.

Now, paths $p_1$ and $p_2$ form a cycle. If this cycle is not oriented, then our result follows. If it is oriented, there must be a path $p'_1$ connecting $v_1$ to $v_2$ in $Q'$ such that $p'_1$ and $p_2$ form a cycle. Furthermore, this cycle is not oriented; otherwise, it contradicts the assertion that $Q'$ is an oriented cycle.
\end{proof}

\begin{proposition}\label{Prop:irreducible quivers contains oriented cycles}
Let $Q$ be an irreducible quiver such that each cycle in $Q$ is an oriented cycle. Then $Q$ is a $\mathcal{Q}^N$ quiver.
\end{proposition}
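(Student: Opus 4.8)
The plan is to show that the hypotheses force $Q$ to decompose into blocks each of which is one of the given oriented cycles, glued together only at single cut vertices in a tree-like fashion, and then to run the chain decomposition of Proposition~\ref{prop:tree of oriented cycles} almost verbatim; the only genuinely new feature is that a single cut vertex may now be shared by more than two cycles, and the whole point is to check that this still respects Definition~\ref{the definition of generalized HL-quiver}.

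First I would use the lemma just established (any two cycles of $Q$ meet in at most one vertex) to pin down the block structure of the underlying graph. If some block were not a single cycle, it would be $2$-connected and hence contain a theta subgraph, i.e.\ two vertices joined by three internally disjoint paths $P_1,P_2,P_3$; then the cycles $P_1\cup P_2$ and $P_1\cup P_3$ would share every vertex of $P_1$, in particular both endpoints, contradicting the lemma. Hence each block is a single cycle, and by hypothesis an oriented one. Since the block-cut-vertex tree of a connected graph is always a tree, $Q$ is built by gluing oriented cycles at cut vertices along a tree.

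Next I would induct on the number $m$ of cycles (equivalently, of blocks). For $m=1$ the quiver is a single oriented cycle $v_1\rightarrow v_2\rightarrow\cdots\rightarrow v_\ell\rightarrow v_1$, and placing $v_1,\dots,v_{\ell-1}$ in one vertical chain (ordered so that its arrows reproduce $v_1\rightarrow\cdots\rightarrow v_{\ell-1}$) and $v_\ell$ in a second chain exhibits it as a $Q^2$ quiver: the remaining arrows $v_{\ell-1}\rightarrow v_\ell\rightarrow v_1$ form a single oblique zigzag meeting the top and the bottom of the first chain, as required by Definition~\ref{the definition of generalized HL-quiver}(2). For the inductive step I would peel off a leaf block $C$, a cycle meeting the rest of $Q$ in a single cut vertex $v$. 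Removing the vertices of $C$ other than $v$ leaves a connected quiver $Q'$ with $m-1$ cycles which is still irreducible with all cycles oriented, so by induction $Q'$ is a $\mathcal{Q}^N$ quiver and $v$ lies in one of its chains $T$. Writing $C=v\rightarrow w_1\rightarrow\cdots\rightarrow w_{\ell-1}\rightarrow v$, I would place $w_1,\dots,w_{\ell-1}$ in a single new chain $S$; the arrows $v\rightarrow w_1$ and $w_{\ell-1}\rightarrow v$ are then the only oblique arrows between $S$ and $T$, and they form the zigzag $w_{\ell-1}\rightarrow v\rightarrow w_1$, again of the shape demanded by Definition~\ref{the definition of generalized HL-quiver}(2).

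The step needing the most care is Definition~\ref{the definition of generalized HL-quiver}(3), namely that the chain-connectivity graph is acyclic, precisely in the new situation where a cut vertex $v$ is a hub for several cycles. Here I would observe that each new chain arising from a cycle through $v$ is joined only to the chain containing $v$, so such chains form a star rather than a cycle, and that chains coming from distinct cycles are never directly joined: an oblique arrow between an internal vertex of one cycle and an internal vertex of another would lie in a cycle of $Q$ meeting each of the two given cycles in more than one vertex, contradicting the lemma. Consequently the inductive construction adds, at each stage, exactly one new chain attached to a unique existing chain, so the chain-connectivity graph is grown as a tree and Definition~\ref{the definition of generalized HL-quiver}(3) holds throughout. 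This reduces the proposition to the lemma together with the construction of Proposition~\ref{prop:tree of oriented cycles}.
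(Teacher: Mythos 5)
Your proposal is correct and takes essentially the same route as the paper, whose proof of this proposition simply states that it is the same as that of Proposition \ref{prop:tree of oriented cycles}: one vertical chain per oriented cycle minus the shared vertex, glued on iteratively; your block--cut-vertex tree, theta-subgraph argument, and induction on the number of blocks merely make rigorous what the paper's ``repeat the above process'' leaves implicit. One small caveat: your assertion that an arrow between internal vertices of two distinct cycles ``would lie in a cycle of $Q$ meeting each of the two given cycles in more than one vertex'' does not follow directly from the lemma as stated, but the fact itself is immediate from the block structure you already established (every arrow incident to a non-cut vertex lies in that vertex's unique block, which you showed is a single cycle), so the argument stands.
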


\begin{proof}
The proof is the same as that of Proposition \ref{prop:tree of oriented cycles}.
\end{proof}

\begin{remark}
If $Q$ is a tree of oriented cycles, it follows from the definition of tree of oriented cycles that each cycle in $Q$ is an oriented cycle. The difference between the quivers described Proposition \ref{Prop:irreducible quivers contains oriented cycles} and trees of oriented cycles lies in the fact that in the quiver described Proposition \ref{Prop:irreducible quivers contains oriented cycles}, any finite number of oriented cycles can intersect at a vertex, whereas in a tree of oriented cycles, at most two oriented cycles intersect at a vertex.
\end{remark}

\begin{example}
An example of the irreducible quiver described in Proposition \ref{Prop:irreducible quivers contains oriented cycles} is as follows
\[
 \scalebox{0.4}{\xymatrix{
{\text{\large { $ 1$ } }} \ar@/^1pc/[rrdd]&  &{\text{\large { $ 5$ } }}\ar[rddd] &   & & {\text{\large { $ 12$ } }}\ar@/_1pc/[llddd] \\
& {\text{\large { $3$ } }}  \ar[rd] &  &   & &  \\
& & {\text{\large { $ 6$ } }} \ar[uu]\ar[ld] \ar@/^1pc/[lldd]&   & {\text{\large { $ 10$ } }}\ar[ld]&  \\
&{\text{\large { $ 4$ } }} \ar[uu] &   &  {\text{\large { $ 9$ } }} \ar@/_1pc/[rrdd] \ar[ldd]\ar[rd]&  &  \\
{\text{\large { $ 2$ } }} \ar[uuuu] & & {\text{\large { $ 7$ } }}\ar[uu]  &   & {\text{\large { $ 11$ } }} \ar[uu]&  \\
& & {\text{\large { $ 8$ } }} \ar[u]&   &  & {\text{\large { $ 13$ } }} \ar[uuuuu]\\
}}
\]
\end{example}

\subsection{Quivers in $\mu^A$ and $\mu^D$}
We use $\mu^{A}$ and $\mu^{D}$ to denote the set of quivers that are mutation equivalent to orientations of type A and type D Dynkin diagrams, respectively. Proposition 2.4 of \cite{BV08} and Theorem 3.1 of \cite{V10} gave explicit descriptions of quivers in $\mu^{A}$ and $\mu^{D}$, respectively. Let us now recall the content of these results.

\begin{lemma}[{\cite[Proposition 2.4]{BV08}}]\label{lemma:type A quiver}
A quiver $Q$ is mutation equivalent to an orientation of a type A Dynkin diagram if and only if $Q$ satisfies the following conditions:
\begin{itemize}
\item all non-trivial cycles are of length $3$ and oriented,
\item a vertex has at most four neighbours,
\item if a vertex has four neighbours, then two of its incident arrows belong to a $3$-cycle, and
the other two arrows belong to another $3$-cycle,
\item if a vertex has precisely three neighbours, then two of its incident arrows belong to a $3$-cycle,
and the third arrow does not belong to any $3$-cycle.
\end{itemize}
\end{lemma}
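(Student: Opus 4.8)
The plan is to prove both implications through the class $\mathcal{A}$ of all quivers satisfying the four listed conditions: I would show that $\mathcal{A}$ contains every orientation of a type $A_n$ Dynkin diagram and is closed under mutation, and conversely that every quiver in $\mathcal{A}$ can be mutated to such an orientation. Since mutation-equivalence is generated by single mutations, these two facts together yield the biconditional.

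For necessity, first note that any orientation of a type $A_n$ diagram lies in $\mathcal{A}$: its underlying graph is a path, so there are no non-trivial cycles and every vertex has at most two neighbours, whence all four conditions hold vacuously. The real content is that $\mathcal{A}$ is invariant under mutation. Fixing $Q\in\mathcal{A}$ and a vertex $k$, the at-most-four-neighbour bound together with the prescribed $3$-cycle structure leaves only finitely many local shapes for the full subquiver on $k$ and its neighbours. For each shape I would compute $\mu_k(Q)$ locally and verify: (i) no $2$-cycle, double arrow, or cycle of length other than $3$ is created, using that every new arrow $i\to j$ arises from a path $i\to k\to j$, which confines where new cycles can appear; (ii) no vertex acquires a fifth neighbour; and (iii) the $3$-cycle membership requirements at the $3$- and $4$-valent vertices are maintained. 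Since type $A_n$ orientations lie in $\mathcal{A}$ and mutation preserves $\mathcal{A}$, every quiver mutation equivalent to a type $A_n$ orientation lies in $\mathcal{A}$.

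For sufficiency I would induct on the number of oriented $3$-cycles of $Q\in\mathcal{A}$. If this number is zero, then by the first condition $Q$ has no non-trivial cycle at all, so its underlying graph is a tree, and the third and fourth conditions force every vertex to have at most two neighbours; hence $Q$ is a path, i.e.\ an orientation of a type $A_n$ diagram. For the inductive step, the conditions organize $Q$ as triangles glued at single vertices, each vertex lying in at most two triangles (two triangles cannot share an edge without violating the membership conditions), so this block structure is a tree. I would then locate a peripheral triangle $T$ attached to the remainder of $Q$ through a single vertex and mutate at a suitable tip of $T$, collapsing the oriented $3$-cycle $T$ into a path; since $\mathcal{A}$ is mutation-closed by the previous paragraph, the result lies in $\mathcal{A}$ with one fewer $3$-cycle, and the induction hypothesis applies.

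The main obstacle is the mutation-invariance of $\mathcal{A}$: one must systematically exclude $2$-cycles, double arrows, and long cycles and confirm the neighbour-and-triangle bookkeeping across every local configuration at $k$. The finiteness of the case list is guaranteed by the four-neighbour bound, but carrying it out is the delicate, case-heavy heart of the argument. Choosing the peripheral triangle and checking that a single mutation removes it cleanly, strictly decreasing the count rather than trading one triangle for another, is a secondary point that demands the same local care.
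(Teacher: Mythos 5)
This lemma is not proved in the paper at all: it is quoted, with citation, from Buan--Vatne \cite{BV08}, so there is no internal proof to compare your attempt against. Judged on its own terms, your strategy is sound and is genuinely different from the original argument of \cite{BV08}, which obtains the characterization through the theory of cluster-tilted algebras of type $A_n$ and the triangulation model of the type $A$ cluster category (quivers in the mutation class are exactly the quivers attached to triangulations of an $(n+3)$-gon, for which the four local conditions can be read off directly). Your route --- show that the class $\mathcal{A}$ cut out by the four conditions contains every orientation of the path and is closed under mutation, then reduce any member of $\mathcal{A}$ to a path by induction on the number of $3$-cycles --- trades that geometric model for an elementary but case-heavy verification; what it buys is independence from cluster-category machinery, at the cost of the finite local case analysis that you correctly identify as the heart of the matter and only sketch.

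Two points in your reduction step should be made explicit to close the argument. First, your phrase ``triangles glued at single vertices'' is not quite the right picture: triangles need not share vertices, since they may also be joined by arrows lying in no $3$-cycle; what is true is that the block decomposition of the underlying connected graph is automatically a tree whose blocks, by the first condition, are single edges or oriented triangles, so a leaf (peripheral) triangle exists whenever any triangle does. Second, a leaf triangle contains exactly one cut vertex, hence its other two vertices have degree exactly $2$ in $Q$ (a pendant arrow at such a vertex would create a further block through it, contradicting that the vertex is not a cut vertex); mutating at either degree-$2$ vertex reverses its two arrows, cancels the third side of the triangle against the newly created arrow, and touches no other arrow of $Q$. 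This makes your ``suitable tip'' precise and guarantees the $3$-cycle count strictly decreases rather than trading one triangle for another, with membership in $\mathcal{A}$ preserved by your closure statement. With these two observations supplied, and the finite case check for mutation-closedness actually carried out, your proof outline is correct and complete.
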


Let $Q\in\mu^A$ and $v\in Q_0$. Following \cite{V10}, the vertex $v$ is said to be a connecting vertex of $Q$ if $v$ has at most $2$ neighbors, and moreover, if $v$ has $2$ neighbors, then $v$ is in a $3$-cycle.

\begin{lemma}[{\cite[Theorem 3.1]{V10}}] \label{Lemma: Four types of quiver in type D}
A quiver $Q$ is mutation equivalent to an orientation of a type D Dynkin diagram if and only if $Q$ is one of the following types:
\begin{itemize}
\item[Type I:] In quiver $Q$, there are two vertices $a$ and $b$ such that they have a common neighbor $c$ and they do not have other neighbors. Moreover, $Q\backslash \{a, b\}=Q'$, $Q'\in \mu^A$, and $c$ is a connecting vertex of $Q'$, $Q$ is shown in Figure \ref{The quiver that are mutation equivalent to D quiver}(a).

\item[Type II:] Quiver $Q$ has a full subquiver with vertex set $\{a, b, c, d\}$, as shown by the black portion in Figure \ref{The quiver that are mutation equivalent to D quiver}(b). Moreover, $Q\backslash \{a,b,c \rightarrow d\}=Q'\sqcup Q''$, where $Q', Q'' \in \mu^{A}$, and there are no arrows connecting the vertices of $Q'$ to the vertices of $Q''$. Furthermore, $c$ and $d$ are connecting vertices of $Q'$ and $Q''$, respectively, $Q$ is shown in Figure \ref{The quiver that are mutation equivalent to D quiver}(b).

\item[Type III:] Quiver $Q$ has a full subquiver  with vertex set $\{a, b, c, d\}$, as shown by the black portion in Figure \ref{The quiver that are mutation equivalent to D quiver}(c). Moreover, $Q\backslash \{a, b\}=Q' \sqcup Q''$, where $Q', Q'' \in \mu^{A}$, and there are no arrows connecting the vertices of $Q'$ to the vertices of $Q''$. Furthermore, $c$ and $d$ are connecting vertices of $Q'$ and $Q''$, respectively, $Q$ is shown in Figure \ref{The quiver that are mutation equivalent to D quiver}(c).

\item[Type IV:] In quiver $Q$, there exists a full subquiver which is an oriented $k$-cycle, where $k\geq 3$. This oriented $k$-cycle is called a central cycle. For each arrow $\alpha: a \rightarrow b$ in this central cycle, if there is a vertex $c_\alpha$ which is not on the central cycle, such that $a\rightarrow b \rightarrow c_{\alpha} \rightarrow a$ is a full subquiver of $Q$, then such a $3$-cycle is called a \textit{spike}. No additional arrows starting or ending at vertices on the central cycle. Moreover, $Q\backslash \{ \text{vertices in the central cycle and their incident arrows} \}=Q' \sqcup Q''  \sqcup Q'''\dots$, there are no arrows connecting the vertices in $Q'$, $Q''$, $Q'''$, $\dots$ with each other, and $Q'$, $Q''$, $Q'''$, $\dots$ are all in $\mu^A$. Furthermore, $c'$, $c''$, $c'''$, $\dots$ are connecting vertices of $Q'$, $Q''$, $Q'''$, $\dots$, respectively, $Q$ is shown in Figure \ref{The quiver that are mutation equivalent to D quiver}(d).
\end{itemize}
\begin{figure} 
\centering
\begin{minipage}[t]{0.3\textwidth}
\resizebox{\textwidth}{!}{ 
\begin{tikzpicture}
\coordinate (A) at (0,0);
\coordinate (B) at (0,2);
\coordinate (C) at (1,1);
\fill (A) circle (2pt); \fill (B) circle (2pt); \fill (C) circle (2pt);
\draw (A) -- (C) -- (B);
\draw[blue] (1.8,1) ellipse (1.2cm and 0.7cm);
\node at (-0.2,2) {a}; \node at (-0.2,0) {b}; \node at (1.2,1) {c};
\node at (2.2,1) {$\textcolor{blue}{Q'}$};
\end{tikzpicture}
}\caption*{(a)}
\end{minipage}
\hfill
\begin{minipage}[t]{0.48\textwidth}
\resizebox{\textwidth}{!}{ 
\begin{tikzpicture}
\coordinate (D) at (0,1);
\coordinate (B) at (1,2);
\coordinate (C) at (2,1);
\coordinate (A) at (1,0);
\draw[-stealth, shorten >=1mm, shorten <=1mm] (D) -- (C);
\draw[-stealth, shorten >=1mm, shorten <=1mm] (C) -- (B);
\draw[-stealth, shorten >=1mm, shorten <=1mm] (B) -- (D);
\draw[-stealth, shorten >=1mm, shorten <=1mm] (A) -- (D);
\draw[-stealth, shorten >=1mm, shorten <=1mm] (C) -- (A);
\fill (A) circle (2pt) node[below] {a};
\fill (B) circle (2pt) node[above] {b};
\fill (C) circle (2pt) node[right] {d};
\fill (D) circle (2pt) node[left] {c};
\node at (-1.2,1) {$\textcolor{blue}{Q'}$};  \node at (3.2,1) {$\textcolor{blue}{Q''}$};
\draw[blue] (-0.85,1) ellipse (1.2cm and 0.7cm); \draw[blue] (2.85,1) ellipse (1.2cm and 0.7cm);
\end{tikzpicture}
}\caption*{(b)}
\end{minipage}
\vspace{1em} 
\begin{minipage}[t]{0.48\textwidth}
\resizebox{\textwidth}{!}{ 
\begin{tikzpicture}
\coordinate (D) at (0,1);
\coordinate (C) at (1,2);
\coordinate (B) at (2,1);
\coordinate (A) at (1,0);
\draw[-stealth, shorten >=1mm, shorten <=1mm] (A) -- (B);
\draw[-stealth, shorten >=1mm, shorten <=1mm] (B) -- (C);
\draw[-stealth, shorten >=1mm, shorten <=1mm] (C) -- (D);
\draw[-stealth, shorten >=1mm, shorten <=1mm] (D) -- (A);
\fill (A) circle (2pt) node[below] {a};
\fill (C) circle (2pt) node[above] {b};
\fill (B) circle (2pt) node[right] {d};
\fill (D) circle (2pt) node[left] {c};
\node at (-1.2,1) {$\textcolor{blue}{Q'}$};  \node at (3.2,1) {$\textcolor{blue}{Q''}$};
\draw[blue] (-0.85,1) ellipse (1.2cm and 0.7cm); \draw[blue] (2.85,1) ellipse (1.2cm and 0.7cm);
\end{tikzpicture}
}\caption*{(c)}
\end{minipage}
\hfill
\begin{minipage}[t]{0.48\textwidth}
\resizebox{\textwidth}{!}{ 
\begin{tikzpicture}
\coordinate (v1) at (2,0); \coordinate (v2) at (3,0.5);
\coordinate (v3) at (3.5,1.5);   \coordinate (v4) at (4,2.5);
\coordinate (v5) at (3,2.5); \coordinate (v6) at (3,3.5);
\coordinate (v7) at (2,3); \coordinate (v8) at (1,2.5);
\coordinate (v9) at (0.5,1.5); \coordinate (v10) at (0,2.5);
\fill (v1) circle (2pt); \fill (v2) circle (2pt);
\fill (v3) circle (2pt); \fill (v4) circle (2pt);
\fill (v5) circle (2pt); \fill (v6) circle (2pt);
\fill (v7) circle (2pt); \fill (v8) circle (2pt);
\fill (v9) circle (2pt); \fill (v10) circle (2pt);
\draw[-stealth, shorten >=1mm, shorten <=1mm] (v1) -- (v2);
\draw[-stealth, shorten >=1mm, shorten <=1mm] (v2) -- (v3);
\draw[-stealth, shorten >=1mm, shorten <=1mm] (v3) -- (v5);
\draw[-stealth, shorten >=1mm, shorten <=1mm] (v5) -- (v7);
\draw[-stealth, shorten >=1mm, shorten <=1mm] (v7) -- (v8);
\draw[-stealth, shorten >=1mm, shorten <=1mm] (v8) -- (v9);
\draw[-stealth, shorten >=1mm, shorten <=1mm] (v9) -- (v10);
\draw[-stealth, shorten >=1mm, shorten <=1mm] (v10) -- (v8);
\draw[-stealth, shorten >=1mm, shorten <=1mm] (v6) -- (v5);
\draw[-stealth, shorten >=1mm, shorten <=1mm] (v5) -- (v4);
\draw[-stealth, shorten >=1mm, shorten <=1mm] (v4) -- (v3);
\draw[-stealth, shorten >=1mm, shorten <=1mm] (v7) -- (v6);
\draw[dashed] (v1) to[bend left=45] (v9);
\node at (v10) [left] {$c'$};  \node at (v6) [above] {$c''$};  \node at (v4) [right] {$c'''$};
\node at (5.5,2.5){$\textcolor{blue}{Q'''}$};  \node at (3,4.6) {$\textcolor{blue}{Q''}$};  \node at (-1.5,2.5) {$\textcolor{blue}{Q'}$};
\draw[blue] (-0.85,2.5) ellipse (1.2cm and 0.7cm); \draw[blue] (3,4.2) ellipse (0.7cm and 0.9cm);
\draw[blue] (4.85,2.5) ellipse (1.2cm and 0.7cm);
\end{tikzpicture}
}\caption*{(d)}
\end{minipage}\caption{(a), (b), (c), (d) are quivers of Type I, Type II, Type III, Type IV in $\mu^D$, respectively.}\label{The quiver that are mutation equivalent to D quiver}
\end{figure}
\end{lemma}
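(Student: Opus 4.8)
The statement is a classification of the mutation class of an orientation of the $D_n$ Dynkin diagram, so the plan is to prove the two inclusions together by exploiting the fact that any two quivers in one mutation class are joined by a finite chain of mutations. The efficient strategy is to let $\mathcal{D}$ denote the set of all quivers of Types I--IV and to establish two things: first, that $\mathcal{D}$ contains at least one orientation of $D_n$; and second, that $\mathcal{D}$ is closed under mutation at every vertex. Granting these, the whole mutation class of a $D_n$ orientation is contained in $\mathcal{D}$. For the reverse inclusion I would check that $\mathcal{D}$ is itself \emph{mutation-connected}, that is, that every quiver of Types I--IV can be brought back to the chosen base orientation by a sequence of mutations staying inside $\mathcal{D}$; this I would carry out by first mutating each type $A$ block $Q',Q'',\dots$ to a linear orientation using the characterization in Lemma \ref{lemma:type A quiver}, and then collapsing the distinguished configuration (the vertices $a,b,c,d$, or the central cycle of Type IV) down to the standard fork of $D_n$.

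For the base orientation I would take the linearly oriented $D_n$ and present it as a Type I quiver: the two leaves of the fork play the role of $a$ and $b$, their common neighbour is the vertex $c$, and $Q'=Q\setminus\{a,b\}$ is a linearly oriented $A_{n-2}$. This $Q'$ lies in $\mu^A$ by Lemma \ref{lemma:type A quiver}, and $c$ is an endpoint of the path, hence has a single neighbour and is a connecting vertex in the sense recalled immediately before the statement. This simultaneously establishes that $\mathcal{D}$ meets the $D_n$ mutation class and supplies the simplest instance needed for mutation-connectedness.

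The heart of the proof, and the step I expect to be the main obstacle, is the mutation-invariance of $\mathcal{D}$. I would stratify the vertices of a Type I--IV quiver by structural role: vertices interior to one of the type $A$ blocks; the connecting vertices $c,c',c'',\dots$; the distinguished vertices $a,b$ (and $c,d$ in Types II and III); and the vertices on the central cycle together with their spikes in Type IV. A mutation at a vertex interior to a block keeps that block in $\mu^A$ by Lemma \ref{lemma:type A quiver}, so the only point to verify is that the connecting-vertex condition is not destroyed and the global type is unchanged. The delicate cases are the mutations at the distinguished vertices and at central-cycle vertices, where the type can genuinely switch---for example a mutation turning a Type I configuration into a Type III one, or altering the length of the central cycle in Type IV---and one must track exactly which $3$-cycles are created or destroyed and how the connecting-vertex property migrates to a new vertex of the adjacent block. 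The key technical input for controlling these transitions uniformly across the four types is the precise local description of connecting vertices in $\mu^A$ coming from \cite{BV08, V10}, which pins down the arrows surrounding $c$ after mutation and thereby forces the mutated quiver back into one of Types I--IV.
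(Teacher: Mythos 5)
You should first note a mismatch of expectations: the paper does not prove Lemma \ref{Lemma: Four types of quiver in type D} at all --- it is imported verbatim as \cite[Theorem 3.1]{V10}, so the only meaningful comparison is with Vatne's original proof. Your architecture (exhibit the linearly oriented $D_n$ as a Type I quiver, prove the union $\mathcal{D}$ of Types I--IV is closed under mutation, then prove mutation-connectedness of $\mathcal{D}$ by normalizing the type $A$ blocks and collapsing the distinguished configuration) is exactly the skeleton of Vatne's argument, and your base-point verification is correct: the two leaves of the fork are $a,b$, their common neighbour is $c$, and $Q'$ is a linearly oriented $A_{n-2}$ with $c$ an endpoint, hence a connecting vertex.

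The genuine gap is that both pivotal steps are announced rather than carried out, and the announced shortcut for the harder one does not exist. Closure under mutation is not a verification one can outsource: it is the entire content of \cite{V10}, requiring a vertex-by-vertex case analysis covering mutations at block-interior vertices, at connecting vertices, at $a,b$ (and at $c,d$ in Types II and III), and at central-cycle and spike vertices in Type IV, together with the bookkeeping of type transitions (e.g.\ Type I passing to Type III, Type II to Type IV with a short central cycle, and the central cycle growing or shrinking when one mutates at a spike vertex). Your claimed ``key technical input'' --- a local description of how connecting vertices behave under mutation, attributed to \cite{BV08, V10} --- is circular: Lemma \ref{lemma:type A quiver} from \cite{BV08} only characterizes membership in $\mu^A$, and the notion of connecting vertex is introduced in \cite{V10} precisely in order to prove this theorem, so the behaviour of connecting vertices under mutations (especially mutations at neighbouring vertices outside the block) is something you must establish, not quote. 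The connectivity step has a parallel defect: mutating a block $Q'$ to a linear orientation via Lemma \ref{lemma:type A quiver} says nothing about where the connecting vertex lands (you need it at an endpoint of the resulting path), you must check that these block-internal mutations do not disturb the distinguished configuration (nontrivial for vertices adjacent to $c$, $c'$, etc.), and the collapse of a Type IV central cycle with spikes to the $D_n$ fork needs an explicit inductive scheme; moreover, unless closure has already been proven, you cannot even assert that your normalizing sequence stays inside $\mathcal{D}$, which your outline tacitly assumes. As it stands the proposal is a correct plan whose two substantive components --- the case analysis and the reduction algorithm --- are exactly the parts left blank.
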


\begin{proposition}\label{prop:quiver in mu A}
If $Q\in \mu^A$, then $Q$ is a $\mathcal{Q}^N$ quiver.
\end{proposition}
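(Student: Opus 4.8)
The plan is to combine the structural classification of $\mu^A$ in Lemma \ref{lemma:type A quiver} with an inductive ``block-peeling'' argument that produces a vertical-chain decomposition satisfying Definition \ref{the definition of generalized HL-quiver}. By Lemma \ref{lemma:type A quiver} every non-trivial cycle of $Q$ is an oriented $3$-cycle; in particular two $3$-cycles cannot share an edge (that would create a non-triangular cycle), so the $2$-connected blocks of the underlying graph of $Q$ are precisely oriented $3$-cycles and single arrows (bridges), and the block--cut-vertex tree of $Q$ is genuinely a tree. I would induct on the number of blocks. In the base case $Q$ is a single vertex, a single arrow, or a single oriented $3$-cycle; each is a $\mathcal{Q}^N$ quiver, the last one by the chain recipe used in the proof of Proposition \ref{prop:tree of oriented cycles}.

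For the inductive step I choose a leaf block $B$, attached to the rest at a unique cut vertex $c$, and let $Q^-$ be obtained by deleting the vertices of $B$ other than $c$ together with their incident arrows. Since deleting vertices creates no new cycles and lowers the number of neighbours, and since an entire bridge or an entire $3$-cycle at $c$ is removed, $Q^-$ still satisfies the conditions of Lemma \ref{lemma:type A quiver} and remains connected, so $Q^-\in\mu^A$; by induction it is a $\mathcal{Q}^N$ quiver and $c$ lies in some vertical chain $C_c$. The goal is then to reinsert $B$ while keeping $c$ a single vertex of $C_c$.

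The decisive device is to keep $c$ isolated in $C_c$ in every case. If $B$ is a bridge $\{c,w\}$, I place $w$ in a new chain of length one and declare the bridge arrow ($c\to w$ or $w\to c$) oblique; being a one-arrow path it satisfies Definition \ref{the definition of generalized HL-quiver}(2) trivially, and the new chain attaches as a leaf of the chain-connection graph, preserving Definition \ref{the definition of generalized HL-quiver}(3). If $B$ is an oriented $3$-cycle, say $c\to y\to z\to c$ (the reverse orientation being symmetric), I take the edge $y\to z$ opposite $c$ as a new vertical chain $z\leftarrow y$ and keep $c$ isolated; the two arrows meeting $c$, namely $z\to c$ and $c\to y$, then assemble into the single monotone alternating path $z\to c\to y$ between $C_c$ and the new chain. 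Here $c$ contributes one index while $z$ and $y$ occupy the strictly increasing positions $1<2$ of the new chain, so Definition \ref{the definition of generalized HL-quiver}(2) holds, and once more the new chain is a leaf, preserving Definition \ref{the definition of generalized HL-quiver}(3). In both cases the new arrows touch no chain other than $C_c$, so the decomposition of $Q^-$ is undisturbed and Definition \ref{the definition of generalized HL-quiver}(1) holds on each chain, giving a $\mathcal{Q}^N$ structure on $Q$.

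The step I expect to require the most care is the $3$-cycle attachment: one must observe that, although both arrows of the cycle incident to $c$ must be oblique, they still form a legitimate monotone path $z\to c\to y$ in the sense of Definition \ref{the definition of generalized HL-quiver}(2), precisely because $c$ is isolated and used only once while its two neighbours sit at consecutive positions of the new chain. The remaining bookkeeping is routine given this observation: a vertex shared by two $3$-cycles can be kept isolated in both since their opposite edges are disjoint and hence spawn distinct non-colliding chains; peeling a leaf block preserves membership in $\mu^A$; and attaching leaves keeps the chain-connection graph a tree. The underlying mutation behaviour behind conditions (1)--(3) has already been verified in the proof of Lemma \ref{lemma of two vertical chains}, so no further computation is needed.
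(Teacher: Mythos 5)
Your proposal is correct and takes essentially the same approach as the paper: Lemma \ref{lemma:type A quiver} reduces $Q$ to oriented $3$-cycles and bridges glued in a tree pattern, and each $3$-cycle contributes its edge opposite the attachment vertex as a vertical chain of length $2$ while bridge/pendant vertices get singleton chains --- exactly the paper's recipe, with your block--cut-vertex induction merely formalizing the paper's ``repeat the above process.'' One small note: the ``keep $c$ isolated'' device is neither guaranteed by your stated inductive hypothesis (the decomposition of $Q^-$ may well place $c$ in a longer chain) nor actually needed, since the only arrows between $C_c$ and the new chain are $z\rightarrow c$ and $c\rightarrow y$, which form a legitimate monotone alternating path in the sense of Definition \ref{the definition of generalized HL-quiver}(2) regardless of the position of $c$ within its chain.
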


\begin{proof}
By Lemma \ref{lemma:type A quiver}, $Q$ is composed of some $3$-cycles and some vertices that are not in the cycles, as well as the arrows that connect cycles and vertices. If a vertex $v$ is in a cycle of $Q$, then this cycle is an oriented $3$-cycle. Let $Q'=v_1\rightarrow v_2 \rightarrow v_3 \rightarrow v_1$ be a $3$-cycle in $Q$. Then we can put $v_1$, $v_2$ in a vertical chain, and put $v_3$ in another vertical chain. If there exists another $3$-cycle $Q''$ such that $Q'_0\cap Q''_0=v$, then we put $Q''_0 \backslash v$ alone in a vertical chain of length $2$, and keep $v$ stay in original vertical chain; if $v$ does not belong to a cycle of $Q$ and it is adjacent to one of the vertices of  $Q'_0$, then we put $v$ alone in a vertical chain. Repeat the above process, $Q$ can always be seen as a $\mathcal{Q}^N$ quiver.
\end{proof}

\begin{example}
Some examples of the quivers in $\mu^A$ are as follows
\[
\begin{minipage}[t]{0.22\textwidth}
 \scalebox{0.6}{\xymatrix{
1\ar[rd]& &4 \ar[ld]\\
 &3\ar[ld] \ar[rd]&&6\ar[ld]\\
2\ar[uu]& &5 \ar[uu]\ar[rd] \\
&&&7\ar[uu]
 }}
\end{minipage}
\begin{minipage}[t]{0.22\textwidth}
 \scalebox{0.6}{\xymatrix{
&1\ar[rd]& & \\
& &3\ar[ld] \ar[rd]&&5\ar[ld]\\
&2\ar[uu]& &4    \\
 }}
\end{minipage}
\begin{minipage}[t]{0.22\textwidth}
 \scalebox{0.6}{\xymatrix{
&&2\ar[ld]\ar[rd]& & \\
& 1\ar[rd] \ar[rd]&&4\\
&&3\ar[uu]& &   \\
&&&  5\ar[uu]
 }}
\end{minipage}
\begin{minipage}[t]{0.14\textwidth}
 \scalebox{0.6}{\xymatrix{
1\ar[rd]& \\
&2\ar[rd]&&4\ar[ld]\ar[rd]\\
 & &3 &&5\\
 }}
\end{minipage}
\]
\end{example}

\begin{proposition}\label{theorem: a quiver of Type D is a generalized HL-quiver}
If $Q\in \mu^D$, then $Q$ is a $\mathcal{Q}^N$ quiver.
\end{proposition}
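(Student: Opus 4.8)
The plan is to invoke the classification in Lemma~\ref{Lemma: Four types of quiver in type D}, which reduces the statement to the four types treated separately, and to handle each by reusing the vertical-chain decomposition already produced for quivers in $\mu^A$ in the proof of Proposition~\ref{prop:quiver in mu A}. In every type, $Q$ is assembled from one or more quivers $Q',Q'',\dots\in\mu^A$ together with a small ``core'': the tail vertices $a,b$ attached to $c$ in Type~I, the pair of $3$-cycles sharing the edge $c\to d$ in Type~II, the $4$-cycle on $\{a,b,c,d\}$ in Type~III, or a central oriented $k$-cycle together with its spikes in Type~IV (see Figure~\ref{The quiver that are mutation equivalent to D quiver}). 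Since each $Q',Q'',\dots$ is already a $\mathcal{Q}^N$ quiver by Proposition~\ref{prop:quiver in mu A}, it comes equipped with vertical chains and oblique arrows satisfying Definition~\ref{the definition of generalized HL-quiver}, so the task is to extend this decomposition across the core.

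The key observation I would use is that each connecting vertex $v$ (a vertex with at most two neighbours, lying in a $3$-cycle when it has exactly two) sits, in the construction of Proposition~\ref{prop:quiver in mu A}, either at an end of its vertical chain or as the single vertex of a length-one chain. This leaves room to attach new oblique arrows at $v$ without disturbing conditions (1)--(3) of Definition~\ref{the definition of generalized HL-quiver}. Concretely, in Type~I I would place the leaves $a$ and $b$ into two length-one vertical chains, each joined to the chain of $c$ by a single oblique arrow; in Types~II and III I would break the core across two chains exactly as in Propositions~\ref{prop:tree of oriented cycles} and~\ref{prop:quiver in mu A}, keeping the shared edge $c\to d$ (resp.\ the relevant directed subpath of the $4$-cycle) inside one chain and placing the remaining core vertices in separate chains, and then graft $Q'$ at its connecting vertex $c$ and $Q''$ at its connecting vertex $d$.

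For Type~IV I would decompose the central oriented $k$-cycle as in Proposition~\ref{prop:tree of oriented cycles}, putting a directed path through $k-1$ of its vertices in one vertical chain and the last vertex in a second chain; each spike vertex $c_\alpha$ would go into its own vertical chain, joined to the central chain by the two oblique arrows of its $3$-cycle exactly as the pair $c,d$ and the third vertices are treated in Type~II; and each piece $Q^{(i)}\in\mu^A$ would be grafted at its connecting vertex $c^{(i)}$. In each case one checks that the oblique arrows between any two of the resulting chains form a single monotone zigzag, as required by Definition~\ref{the definition of generalized HL-quiver}(2).

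The main obstacle I anticipate is the verification, in Type~IV, of Definition~\ref{the definition of generalized HL-quiver}(3): that no three vertical chains are pairwise connected, so that the chain-adjacency graph remains a tree even though the central cycle, its spikes (which share a full edge with the central cycle), and the grafted $\mu^A$ quivers all meet near the central cycle. Here I would argue that every connection created by the core hangs off the central cycle without closing a new cycle among the chains, using the tree structure of each $Q^{(i)}$ together with the fact that distinct spikes and attached quivers are pairwise disjoint away from the central cycle; this rules out any forbidden triangle of chains. The grafting in Types~II and III is a degenerate instance of the same verification, so disposing of Type~IV completes the proof.
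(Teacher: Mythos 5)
Your proposal follows essentially the same route as the paper's proof: invoke the four-type classification of Lemma~\ref{Lemma: Four types of quiver in type D}, reuse the chain decomposition of Proposition~\ref{prop:quiver in mu A} for the pieces $Q', Q'', \dots$ with the connecting vertices placed in short (length-one) chains, and handle the cores identically --- $a,b$ in length-one chains for Type~I, the edge $c\to d$ in one chain for Type~II, the path $c\to a\to d$ in one chain for Type~III, and one central-cycle vertex alone versus the remaining $k-1$ in a second chain (with spike vertices in their own chains) for Type~IV. Your added verification of Definition~\ref{the definition of generalized HL-quiver}(3) in Type~IV is a correct elaboration of a point the paper leaves implicit, not a different method.
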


\begin{proof}
Let $Q\in \mu^{D}$. We need to consider $4$ cases.

{\bf Case 1.} If $Q$ is of Type I of Lemma \ref{Lemma: Four types of quiver in type D}. Since $Q' \in\mu^{A}$, we can rearrange the vertices of $Q'$ such that $Q'$ is a $\mathcal{Q}^N$ quiver, as shown in Proposition \ref{prop:quiver in mu A}. Next, we just need to put vertices $a$ and $b$ in two vertical chains of length $1$ of $Q$, respectively. Then $Q$ can be viewed as a $\mathcal{Q}^N$ quiver.

{\bf Case 2.}  If $Q$ is of Type II of Lemma \ref{Lemma: Four types of quiver in type D}. Since $Q', Q'' \in\mu^{A}$, by Proposition \ref{prop:quiver in mu A}, we can rearrange the vertices of $Q'$ and $Q''$ such that $Q'$ and $Q''$ are $\mathcal{Q}^N$ quivers, and $c$, $d$ lie in two vertical chains of length $1$ of $Q'$ and $Q''$, respectively. Next, we just need to put vertices $a$ and $b$ in two separate vertical chains alone, and put $c \rightarrow d$ in a vertical chain alone in $Q$. Then $Q$ can be viewed as a $\mathcal{Q}^N$ quiver.

{\bf Case 3.} If $Q$ is of Type III of Lemma \ref{Lemma: Four types of quiver in type D}. Since $Q', Q'' \in\mu^{A}$, by Proposition \ref{prop:quiver in mu A}, we can rearrange the vertices of $Q'$ and $Q''$ such that $Q'$ and $Q''$ are $\mathcal{Q}^N$ quivers, and $c$, $d$ lie in two vertical chains of length $1$ of $Q'$, $Q''$, respectively. Next, we just need to put $c\rightarrow a\rightarrow d$ in a vertical chain alone and put $b$ in a vertical chain alone. Then $Q$ can be viewed as a $\mathcal{Q}^N$ quiver.

{\bf Case 4.} If $Q$ is of Type IV of Lemma \ref{Lemma: Four types of quiver in type D}. Since $Q'$, $Q''$, $Q'''$, $\dots \in\mu^{A}$, by Proposition \ref{prop:quiver in mu A}, we can rearrange the vertices of $Q'$, $Q''$, $Q'''$, $\dots$ such that $Q'$, $Q''$, $Q'''$, $\dots$ are $\mathcal{Q}^N$ quivers, and $c'$, $c''$, $c'''$, $\dots$ lie in vertical chains of length $1$ of $Q'$, $Q''$, $Q'''$, $\dots$, respectively. Next, we only need to select a vertex from the central cycle and place it in a vertical chain alone, while putting the other vertices from the central cycle into another vertical chain. Then $Q$ can be viewed as a $\mathcal{Q}^N$ quiver.
\end{proof}

To illustrate the above lemma, we give an example as follows.

\begin{example}
Some quivers in $\mu^D$ are displayed in Figure \ref{some quivers mutation equivalent to type D}.
\begin{figure}
\centering
 \scalebox{0.99}{
\begin{minipage}[t]{0.24\textwidth}
 \scalebox{0.5}{\xymatrix{
 \\
&& 1 \ar[rdd]& \\
&& & & & 4 \ar[lld]\\
&& & 2 \ar[ldd]\ar[rrd] & & \\
&& & & &  5 \ar[uu]\\
& & 3 &
 }}
 \vspace{0.75cm}
\caption*{(a)}
\end{minipage}
\hskip0.6cm
\begin{minipage}[t]{0.24\textwidth}
 \scalebox{0.5}{\xymatrix{
&1\ar[rrd] & & & \\
&& & 6\ar[lld]\ar[ldd] \ar[rdd]&  \\
&2\ar[uu] & & & \\
&& 5\ar[rdd]& &8\ar[ldd]  \\
&3 \ar[rrd]& & & \\
&& & 7\ar[uuuu]\ar[lld]& \\
&4\ar[uu] & & &
}}
\caption*{(b)}
\end{minipage}
\begin{minipage}[t]{0.24\textwidth}
 \scalebox{0.5}{\xymatrix{
 \\
& & 4 \ar[ldd]&  \\
1 \ar[rd]& &   & 7 \ar[ld] \\
 &3 \ar[ld]\ar[rdd]&   5\ar[uu] \ar[rd] &   \\
 2 \ar[uu]& &   & 8 \ar[uu] \\
 & &  6\ar[uu] &
}}
\vspace{0.726cm}
\caption*{(c)}
\end{minipage}
\begin{minipage}[t]{0.24\textwidth}
\scalebox{0.5}{\xymatrix{
& & 4 \ar[rdddd]&   \\
& & &  \\
1\ar[rd]& & 5 \ar[uu] \ar[ld] &    \\
& 3\ar[rd] \ar[ld]&  &  \\
2\ar[uu]& &  6 \ar[uu]  &  8 \ar[ldd]  \\
&  & &  \\
& &  7 \ar[uu] &
 }}
 \vspace{0.27cm}
 \caption*{(d)}
\end{minipage}}
\caption{(a), (b), (c), (d) are quivers of Type I, Type II, Type III, Type IV in $\mu^D$, respectively.}
\label{some quivers mutation equivalent to type D}
\end{figure}
\end{example}

\begin{remark}\label{remark: the quiver in mu A}
In \cite{GM17}, Garver and Musiker gave a combinatorial approach to construct maximal green sequences for quivers in $\mu^A$. In \cite{CDRSW16}, Cormier, Dillery, Resh, Serhiyenko, and Whelan explicitly constructed minimal length maximal green sequences for quivers in $\mu^A$ and showed that the minimal length is equal to the sum of the number of vertices and the number of 3-cycles in the quiver. 
We have various methods to view a quiver $Q$ in $\mu^{A}$ as a $\mathcal{Q}^N$ quiver, different approaches lead to distinct maximal green sequences. In fact, according to Theorem \ref{Main Theorem: maximal green sequences}, the construction method illustrated in Proposition \ref{prop:quiver in mu A} ensures that the corresponding maximal green sequence of $Q$ is of minimal length.  Moreover, the maximal green sequences constructed for quivers in $\mu^A$ in this paper are different from those constructed in \cite{CDRSW16, GM17}. For instance, by Theorem \ref{Main Theorem: maximal green sequences}, the mutation sequence
\[
\mu_{3} \circ \mu_{6} \circ \mu_{5} \circ \mu_{3} \circ \mu_{2} \circ \mu_{4} \circ \mu_{7} \circ \mu_{5} \circ \mu_{3} \circ \mu_{1} \circ \mu_{2}
\]
is a maximal green sequence of the quiver in Figure \ref{example: A quiver in mu A}. On the other hand, using the approach described by Garver and Musiker, the mutation sequences
\[
\mu_{3} \circ \mu_{6} \circ \mu_{2} \circ \mu_{1} \circ \mu_{6} \circ \mu_{5} \circ \mu_{4} \circ \mu_{3} \circ \mu_{6} \circ \mu_{5} \circ \mu_{7} \circ \mu_{6}
\]
and
\[
\mu_{1} \circ \mu_{3} \circ \mu_{5} \circ \mu_{7} \circ \mu_{6} \circ \mu_{1} \circ \mu_{3} \circ \mu_{5} \circ \mu_{4} \circ \mu_{1} \circ \mu_{3} \circ \mu_{2} \circ \mu_{1}
\]
are maximal green sequences of the quiver in Figure \ref{example: A quiver in mu A}, see \cite[Remark 6.6]{GM17}.
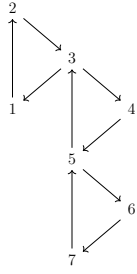
\begin{figure}
\adjustbox{scale=0.5}{\begin{tikzcd}
	2 \\
	& 3 \\
	1 && 4 \\
	& 5 \\
	&& 6 \\
	& 7
	\arrow[from=3-1, to=1-1]
	\arrow[from=1-1, to=2-2]
	\arrow[from=2-2, to=3-1]
	\arrow[from=2-2, to=3-3]
	\arrow[from=3-3, to=4-2]
	\arrow[from=4-2, to=2-2]
	\arrow[from=4-2, to=5-3]
	\arrow[from=5-3, to=6-2]
	\arrow[from=6-2, to=4-2]
\end{tikzcd}}
\caption{A quiver in $\mu^A$.}\label{example: A quiver in mu A}
\end{figure}
\end{remark}

\begin{remark}
In \cite{GMS18}, the authors provided explicit constructions of minimal length maximal green sequences for quivers in $\mu^{D}$ with the help of triangulations of an annulus or a punctured disk. In this paper, Theorem \ref{Main Theorem: maximal green sequences} provides explicit maximal green sequences for quivers in $\mu^D$. In particular, our construction method of the maximal green sequences for quivers in $\mu^D$ is different from the construction method of \cite{GMS18}. 
\end{remark}

\section*{Acknowledgements}
The authors are thankful to Changjian Fu and Jian-Rong Li for helpful discussions. The work was partially supported by the National Natural Science Foundation of China (No. 12171213, 12001254) and by Gansu Province Science Foundation for Youths (No. 22JR5RA534).

\end{document}